 \newcounter{enunciato}[section]
 \newtheorem{ittheorem}{Theorem}
 \newtheorem{itlemma}{Lemma}
 \newtheorem{itproposition}{Proposition}
 \newtheorem{itdefinition}{Definition}
 \newtheorem{itcorollary}{Corollary}
 \newtheorem{itconjecture}{Conjecture}
 \newenvironment{theorem}{\addtocounter{enunciato}{1}
 \begin{ittheorem}}{\end{ittheorem}}
 \newenvironment{lemma}{\addtocounter{enunciato}{1}
 \begin{itlemma}}{\end{itlemma}}
\newcommand{\halmos}{\rule{1ex}{1.4ex}}
\newenvironment{proof}{\noindent {\em Proof}.\,\,}
   {\hspace*{\fill}$\halmos$\medskip}
\def \qed {{\hspace*{\fill}$\halmos$\medskip}}
\def \ba {\begin{array}}
\def \ea {\end{array}}
\def \Z {{\mathbb Z}}
\def \R {{\mathbb R}}
\def \N {{\mathbb N}}
\begin{document}
\title{Maximising Neumann eigenvalues on rectangles}

\author{\renewcommand{\thefootnote}{\arabic{footnote}}
M.\ van den Berg
\footnotemark[1]
\\
\renewcommand{\thefootnote}{\arabic{footnote}}
D.\ Bucur
\footnotemark[2]
\\
\renewcommand{\thefootnote}{\arabic{footnote}}
K.\ Gittins
\footnotemark[3]
}

\footnotetext[1]{School of Mathematics, University of Bristol, University Walk, Bristol BS8 1TW, United Kingdom.\, {\tt mamvdb@bristol.ac.uk} }

\footnotetext[2]{ Institut Universitaire de France and Laboratoire de Math\'{e}matiques, CNRS UMR 5127, Universit\'{e} de Savoie Campus Scientifique, 73376 Le-Bourget-du-Lac, France.\, {\tt dorin.bucur@univ-savoie.fr}}

\footnotetext[3]{School of Mathematics, University of Bristol, University Walk, Bristol BS8 1TW, United Kingdom.\, {\tt kg13951@bristol.ac.uk}}

\date{{ 29 July 2016}}

\maketitle

\begin{abstract}
We obtain results for the spectral optimisation of Neumann eigenvalues on rectangles in $\R^2$ with a measure or perimeter constraint. We show that the rectangle with measure $1$ which maximises the $k$'th
Neumann eigenvalue converges to the unit square in the Hausdorff metric as $k\rightarrow \infty$. Furthermore, we determine the unique maximiser of the $k$'th Neumann eigenvalue on a rectangle with given perimeter.

\vskip 0.5truecm
\noindent
{\it AMS} 2000 {\it subject classifications.} 35J20, 35P99.\\
{\it Key words and phrases.} Spectral optimisation, Neumann eigenvalues.

\medskip\noindent
{\it Acknowledgements.} MvdB acknowledges support by The Leverhulme Trust
through International Network Grant \emph{Laplacians, Random Walks, Bose Gas,
Quantum Spin Systems}.  DB is a member of the ANR {\it Optiform}  programme ANR-12-BS01-0007. KG was supported by an EPSRC DTA. The authors thank
Beniamin Bogosel for very helpful numerical assistance and the referee for
{ helpful} suggestions.

\end{abstract}

\section{Introduction}\label{Introduction}
Let $\Omega$ be an open or quasi-open set in Euclidean space $\R^m \; (
m=2,3,\dots)$, with boundary $\partial \Omega$, and let
$-\Delta_{\Omega}$ be the Dirichlet Laplacian acting in
$L^2(\Omega)$. It is well known that if $\Omega$ has finite
Lebesgue measure $|\Omega|$ then $-\Delta_{\Omega}$ has compact
resolvent, and the spectrum of $-\Delta_{\Omega}$ is discrete and
consists of eigenvalues $\lambda_1(\Omega)\le\lambda_2(\Omega)\le
\dots$ with $\lambda_j(\Omega)\rightarrow\infty$ as
$j\rightarrow\infty$.
The problem of minimising the eigenvalues of the Dirichlet Laplacian over sets in $\R^m$ with a geometric constraint has been studied extensively.
For example it was shown in \cite{B1} and \cite{MP} that for any $k\in \N$ the minimisation problem
\begin{equation}\label{e2a}\inf \{\lambda_k(\Omega) :\Omega\
\textup{{quasi-}open in}\ \R^m ,\ |\Omega| = c \}
 \end{equation}
has a bounded minimiser with finite perimeter. The celebrated Faber--Krahn and Krahn--Szeg\"o inequalities assert that these minimisers are a ball with measure $c$ for $k=1$ and the union of two disjoint balls each with measure $c/2$ for $k=2$ respectively, see \cite{H}. It has been conjectured that if $m=2,k=3$ the disc with measure $c$ is a minimiser.
Less is known for higher values of $k$.
For $m=2, k \geq 5$, it was shown in \cite{Be} that neither the disc nor a disjoint union of discs is optimal.
In addition, numerical experiments { indicate} as to what the minimisers look like see \cite{Ou, AF1}. Some bounds on the number of components of minimisers of \eqref{e2a} have been obtained in \cite{vdBI}.

Other constraints than the measure have been considered in \cite{BBH}, \cite{DPV}, \cite{BF} and \cite{vdB}. For example, it was shown in \cite{DPV} that a minimiser exists for the $k$th Dirichlet eigenvalue under the constraint that the perimeter is fixed and the measure is finite. Existence in the planar case is particularly straightforward, since elements of minimising sequences are convex and bounded uniformly in $k$. The latter fact allowed Bucur and Freitas to show in \cite{BF} that there exists a sequence of translates of these minimisers that converges to the disc in the Hausdorff metric. This phenomenon of an asymptotic shape has been established for a wide class of constraints in \cite{vdB}. However, this class does not include the original measure constraint.

Numerical experiments have also been carried out to investigate the optimisation of Dirichlet eigenvalues
subject to a perimeter constraint, see \cite{AF2} and \cite{BoOu}.
These papers use different methods to obtain insight as to what the optimal shapes would look like.
 The asymptotic behaviour of the $k$th optimal eigenvalue on $m$-dimensional cuboids (rectangular parallelepipeds) with a perimeter constraint was analysed in \cite{AF2}.

In \cite{CoSo} it was shown that the infimum in \eqref{e2a} with $c=1$ behaves like $4\pi ^2k^\frac 2m\omega_m^{-\frac 2m}$ as $k \rightarrow \infty$ provided the P\' olya conjecture for Dirichlet eigenvalues holds. That is for every bounded open set $\Omega \subset \R^m$, $\lambda_k(\Omega)\ge 4\pi^2 (|\Omega|\omega_m)^{-\frac 2m}k^\frac 2m$, where $\omega_m$ is the measure of the ball in $\R^m$ with radius $1$.

In a recent paper, \cite{AF}, Antunes and Freitas proved the following asymptotic shape result with a measure constraint.
For $a\ge 1$, let
\begin{equation*}
R_a=\{(x_1,x_2):0<x_1<a,\, 0<x_2<a^{-1}\}
\end{equation*} be a rectangle with measure $1$. The infimum of the variational problem

\begin{equation*}
\lambda_k^*:=\inf\{\lambda_k(R_a)\}
\end{equation*}
is achieved for some $a_k^*\ge 1$, and $\lim_{k\rightarrow \infty}a_k^*=1$.

A heuristic explanation for this asymptotic shape result is the following (see \cite{AF}). For any rectangle in $\R^2$ with measure $|R|$ and perimeter $\textup{Per}(R)$
one has that
\begin{equation}\label{e2c}
\lambda_k(R)=\frac{4\pi k}{|R|}+\frac{2\pi^{1/2}\textup{Per}(R)k^{1/2}}{|R|^{3/2}}+o(k^{1/2}),\ k \rightarrow \infty.
\end{equation}
So if $|R|=1$ then \eqref{e2c} suggests that the rectangle that minimises $\lambda_k(R),\ k \rightarrow \infty$ is the one with minimal perimeter, i.e. the unit square.
The main part of the proof in \cite{AF} is to show that the $a_k^*$'s are uniformly bounded. It is then possible to use well-known number theoretic results for the number of lattice points
inside ellipses where the ratio of the axes remains bounded.

The asymptotic formula \eqref{e2c} holds true for a wide class of planar domains with a smooth boundary that satisfy a billiard condition. This suggests that the asymptotic shape with fixed measure is a disc. The proof of this seems well beyond reach, even if an additional convexity constraint is imposed, \cite{vdB}.

In this paper we consider the maximisation of Neumann eigenvalues. It is well known that if $\Omega$ is an open, bounded  and connected set in $\R^m$ with Lipschitz boundary then the spectrum of the Neumann Laplacian is discrete and consists of eigenvalues   $\mu_0(\Omega)<\mu_1(\Omega)\le \mu_2(\Omega)\le \dots$ accumulating at infinity. The first Neumann eigenvalue has multiplicity $1$ and $\mu_0(\Omega)=0$. Szeg\"o and Weinberger showed that $\mu_1(\Omega)\le \mu_1(\Omega^*)$, where $\Omega^*$ is the ball with the same measure as $\Omega$, {see \cite{H}}. It was shown in \cite{GNP} that { the union of} two disjoint planar discs, each with measure { $c/2$,} achieves the supremum of $\mu_2(\Omega)$ in the class of simply connected sets in $\R^2$ {with measure $c$}. Nothing is known about the existence of maximisers for higher $k$ (see, for instance, \cite[Subsection 7.4]{DB}).
In this paper, we consider the problem of maximising the $k$'th Neumann eigenvalue over all rectangles in $\R^2$ with fixed measure, and study the asymptotic behaviour as $k \rightarrow \infty$.

Our main result is the following.
\begin{theorem}\label{the1}
\begin{enumerate}
\item[\textup{(i)}] Let $k\in \N$. The variational problem
 \begin{equation}\label{i1}
\mu_k^*:=\sup\{\mu_k(R_b) :  b\ge 1\}
\end{equation}
has a maximising rectangle $R_b$ with $b=b_k^*$.
\item[\textup{(ii)}] Any sequence of optimal rectangles $(R_{b_k^*})$ converges in the Hausdorff metric to the unit square as $k\rightarrow\infty$. Moreover
there exists $\theta\in (\frac12,1)$ such that for $k\rightarrow \infty$,
\begin{equation}\label{i4}
b_k^*=1+O(k^{(\theta-1)/4}).
\end{equation}
\item[\textup{(iii)}] Let $\mu_k^*=\mu_k(R_{b_k^*})$.
Then
\begin{equation}\label{i3}
\mu_k^*=4\pi k-8(\pi k)^{1/2}+O(k^{(\theta +1)/4}),\, k\rightarrow \infty.
\end{equation}
\end{enumerate}
\end{theorem}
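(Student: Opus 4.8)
The plan is to work with the explicit spectrum of the Neumann Laplacian on $R_b=(0,b)\times(0,b^{-1})$, which is $\{\pi^2(b^{-2}n_1^2+b^2n_2^2):(n_1,n_2)\in\N_0^2\}$. Hence $\mu_k(R_b)+1$ (say) counts lattice points: $N(b,\lambda):=\#\{(n_1,n_2)\in\N_0^2:\pi^2(b^{-2}n_1^2+b^2n_2^2)\le\lambda\}$ is the number of eigenvalues (with multiplicity) that are $\le\lambda$, so $\mu_k(R_b)$ is determined by $N(b,\mu_k(R_b))=k+1$. This is the Neumann analogue of the quarter-ellipse counting function used in \cite{AF}, the only difference being that all of $\N_0^2$ (including the axes) is counted rather than the strictly positive lattice. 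For part (i), existence: I would show that the map $b\mapsto\mu_k(R_b)$ is continuous on $[1,\infty)$ and that $\mu_k(R_b)\to\infty$ as $b\to\infty$ (since squeezing the rectangle forces the counting function down), so the supremum over $b\ge1$ is attained; alternatively, reduce to a compact interval $[1,B_k]$ by a coarse a priori bound $b_k^*\le B_k$.

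The core is parts (ii)–(iii), and the engine is a two-term lattice-point estimate with explicit control of the error in terms of $b$. The classical result (Huxley-type, or even the classical van der Corput exponent) gives that for $b$ in a \emph{bounded} range,
\begin{equation*}
N(b,\lambda)=\frac{\lambda}{4\pi}+\frac{\lambda^{1/2}}{2\pi}\Big(\frac{b+b^{-1}}{2}\Big)\cdot\frac{2}{\pi^{?}}+O\big((b+b^{-1})^{?}\lambda^{\theta/2}\big)
\end{equation*}
for some $\theta<1$; more precisely, the area term is $\lambda/(4\pi)$, the perimeter/edge term is $(b+b^{-1})\lambda^{1/2}/(2\pi)$ coming from the two coordinate axes contributing half-lines of lattice points (this is exactly where the Neumann count differs from \cite{AF}), and the remainder is $O_\varepsilon((b+b^{-1})\,\lambda^{\theta/2})$ uniformly as long as $b$ stays bounded. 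I would isolate the explicit constant $\theta\in(\tfrac12,1)$ as the best available exponent for the Gauss circle / divisor-type problem with a bounded aspect ratio (so $\theta$ can be taken close to the Huxley value $131/208$ but any admissible exponent will do for the statement). Inverting $N(b,\mu_k(R_b))=k+1$ then gives, for bounded $b$,
\begin{equation*}
\mu_k(R_b)=4\pi k-4(b+b^{-1})(\pi k)^{1/2}+O\big((b+b^{-1})^{2}\big)+O\big((b+b^{-1})\,k^{\theta/2}\big).
\end{equation*}
The function $b+b^{-1}$ is minimised at $b=1$, so heuristically the maximiser is close to the square; the remainder terms are what must be beaten to make this rigorous.

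The main obstacle, and the crux of the argument, is the \textbf{uniform a priori bound} on $b_k^*$: one must show $b_k^*$ stays bounded (indeed $b_k^*=1+o(1)$) \emph{before} the lattice-point asymptotics with bounded aspect ratio can be applied. I would proceed in two stages. First, a crude bound: compare $\mu_k(R_b)$ with $\mu_k(R_1)$ (the square), for which the spectrum is explicit and $\mu_k(R_1)=4\pi k+O(k^{1/2})$ by Gauss circle estimates; then show that for $b$ large the counting function $N(b,\lambda)$ is too small, e.g. by the elementary bound $N(b,\lambda)\le (1+b^{-1}\lambda^{1/2}/\pi)(1+\lceil b\lambda^{1/2}/\pi\rceil \text{ along a thin strip})$—more usefully, $\mu_k(R_b)\ge \pi^2 b^2$ once $k\ge1$ (the eigenvalue $\pi^2 b^2$ from $(0,1)$ already lies below $\mu_k$ only for small $k$, so one must be careful), giving a first bound like $b_k^*=O(k^{c})$ for some explicit $c$. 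Second, bootstrap: feed this polynomial bound into a version of the lattice estimate valid for slowly growing $b$ (the error term $O((b+b^{-1})\lambda^{\theta/2})$ survives as long as $b\le k^{(1-\theta)/2-\delta}$ or so), compare $\mu_k(R_b)$ with $\mu_k^*\ge\mu_k(R_1)$, and extract from $4\pi k-4(b+b^{-1})(\pi k)^{1/2}+\dots\ge 4\pi k-8(\pi k)^{1/2}+\dots$ the inequality $(b+b^{-1}-2)(\pi k)^{1/2}\lesssim k^{\theta/2}$, hence $b+b^{-1}-2=O(k^{(\theta-1)/2})$, i.e. $(b^{1/2}-b^{-1/2})^2=O(k^{(\theta-1)/2})$, which yields $b_k^*=1+O(k^{(\theta-1)/4})$ — exactly \eqref{i4}. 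Hausdorff convergence of $R_{b_k^*}$ to the unit square is then immediate since $b_k^*\to1$ forces both side lengths to $1$. Finally, substituting $b=b_k^*$ with $b+b^{-1}=2+O(k^{(\theta-1)/2})$ back into the asymptotic for $\mu_k(R_b)$ gives
\begin{equation*}
\mu_k^*=4\pi k-8(\pi k)^{1/2}+O\big(k^{1/2}\cdot k^{(\theta-1)/2}\big)+O(k^{\theta/2})=4\pi k-8(\pi k)^{1/2}+O(k^{(\theta+1)/4}),
\end{equation*}
where I have absorbed $k^{\theta/2}\le k^{(\theta+1)/4}$ (valid since $\theta<1$) and noted $k^{1/2}\cdot k^{(\theta-1)/2}=k^{\theta/2}$; a small amount of care in balancing the two error sources pins down the exponent $(\theta+1)/4$ in \eqref{i3}. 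The routine but slightly delicate points I am glossing over are: the exact constant in the edge term of $N(b,\lambda)$ (one must track the half-lines on the axes and the single lattice point at the origin), and verifying that the known lattice-point remainder estimates are genuinely uniform in the aspect ratio over the relevant range — for this I would cite the uniform version of the two-dimensional Weyl law / van der Corput bound for dilated convex bodies as in \cite{AF} and adapt it.
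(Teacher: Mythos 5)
Your overall strategy (explicit spectrum, two--term lattice--point asymptotics, an a priori bound on $b_k^*$ followed by a bootstrap) is the same as the paper's, but there are three concrete problems, one of which is the crux of the whole theorem.

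First, a sign error in part (i): for the Neumann problem on $R_b=(0,b)\times(0,b^{-1})$ the low modes are those varying only in the long direction, so $\mu_k(R_b)=\pi^2k^2/b^2\to 0$ as $b\to\infty$, not $\infty$. (If your claim were true the supremum would not be attained.) The correct argument is that a maximising sequence cannot have $b^{(\ell)}\to\infty$ because the eigenvalue would tend to $0<\pi^2\le\mu_k(R_1)$; this is easily repaired, but as written the logic is backwards.

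Second, and decisively: your plan for the uniform bound $\limsup_k b_k^*<\infty$ does not close. The crude first--stage bound one can actually extract (comparing the $k$ axis modes $\pi^2p^2/b^2$, $p\le k$, with $\nu_k\ge 4\pi k-O(k^{1/2})$) is only $b_k^*\le(\pi/4)^{1/2}k^{1/2}(1+o(1))$. Your bootstrap requires a two--term estimate with error $O((b+b^{-1})\lambda^{\theta/2})$ valid for $b\le k^{(1-\theta)/2-\delta}$, i.e.\ roughly $b\le k^{0.18}$; there is an unbridged gap between $k^{1/2}$ and $k^{0.18}$, and no uniform Huxley--type bound is available (or cited) in that range. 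The paper bridges it with two ingredients you are missing: (a) an \emph{elementary} counting lemma (Lemma 2.2) whose error term $b^{3/2}\mu^{1/4}$ degrades gracefully and remains useful for $b$ up to a small multiple of $\mu^{1/2}\sim k^{1/2}$; and (b) a refinement (Lemma 2.3) of the constant in the $O(k^{1/2})$ bound from $(\pi/4)^{1/2}\approx0.886$ down to $0.46359$, obtained by noting that $\mu_k^*$ must be a multiple eigenvalue (otherwise $\partial_b\mu_k(R_b)=-2\pi^2k^2/b^3\ne0$ contradicts optimality) and counting the modes $(p,1)$ against the modes $(p,0)$. This refinement is not cosmetic: the bootstrap inequality produces a factor $1-(2\pi b_k^*)^{1/2}\nu_k^{-1/4}$, which is negative for $b_k^*\sim0.886\,k^{1/2}$ and positive (about $0.09$) only once the constant is below roughly $0.56$. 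Without an argument of this kind your proof of (ii), and hence of (iii), does not go through.

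Third, for the upper bound in (iii) you invert $N(b_k^*,\mu_k^*)=k+1$, which silently assumes the multiplicity $\Theta_k$ of $\mu_k^*$ is negligible; in fact $N(\mu_k^*;b_k^*)=k+\Theta_k-1$ and one needs $\Theta_k=o(k^{(\theta+1)/4})$. The paper gets $\Theta_k=O(k^{1/3})$ from Jarnik's theorem on lattice points on convex curves; some such input must be supplied.
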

The exponent $\theta$ shows up in the remainder of Gauss' circle problem.
It is known that for any $\epsilon>0$, (see the Introduction in \cite{Hux})
\begin{equation*}
\theta=\frac{131}{208}+\epsilon.
\end{equation*}

The table below shows that the maximising rectangles for $k=4,6,10$ and $k=15$ are not unique.
The eigenvalues of the rectangle $R_b$ are of the form
\begin{equation}\label{ii}
\mu_{p,q}= \frac{\pi^2 p^2}{b^2}+ \pi^2 q^2 b^2,
\end{equation}
for $p,q \in \mathbb Z^+=\N\cup\{0\}$. The ordered list of real numbers $\{\mu_{p,q}:\, p\in \Z^+, q\in \Z^+\}$
are the eigenvalues $\{0=\mu_0(R_b)<\mu_1(R_b)\le \mu_2(R_b)\le \dots \}$ of the Neumann Laplacian on $R_b$.
From the proof of Theorem \ref{the1}(ii) we will see that the maximised $k$th eigenvalue has multiplicity at least $2$. In the table below we list the values of $\mu_k^*$ for $k=1,\dots,15$ as well as the $b_k^*$ and the pairs of maximising modes that realise this maximum.

\begin{center}
\begin{tabular}{|c | c | c | c |}
\hline
$k$ & $\mu_k^*$ & $b_k^*$ & \textup{Maximising pair of modes}\\
\hline
$1$ & $\pi^2$                     & $1$ & $(1,0),(0,1)$\\
\hline
$2$ & $2\pi^2$                    & $\sqrt 2$ & $(2,0),(0,1)$\\
\hline
$3$ & $3\pi^2$                    & $\sqrt 3$ & $(3,0),(0,1)$\\
\hline
$4$ & $4\pi^2$                    & $2$\, \textup{or}\, $1$ & $(4,0),(0,1)$ \textup{or}\, $(2,0),(0,2)$\\
\hline
$5$ & $5\pi^2$                    & $\sqrt 5$ & $(5,0),(0,1)$\\
\hline
$6$ & $6\pi^2$                    & $\sqrt 6$ \textup{or}\, $\frac12\sqrt 6$&$(6,0),(0,1)$ \textup{or}\, $(3,0),(0,2)$\\
\hline
$7$ & $7\pi^2$                    & $\sqrt 7$ & $(7,0),(0,1)$\\
\hline
$8$ & $\frac{18\sqrt 5}{5}\pi^2$   & $\frac {\sqrt 2}{2} 5^{1/4}$ & $(2,2),(3,0)$\\
\hline
$9$ & $\frac{16\sqrt 3}{3}\pi^2$  & $\frac{2}{3^{1/4}}$& $(4,1),(0,2)$\\
\hline
$10$&$10\pi^2$                    & $\frac12 \sqrt {10}$ \textup{or} $\sqrt {10}$ & $(5,0),(0,2)$ \textup{or}\, $(10,0),(0,1)$\\
\hline
$11$ &$12\pi^2$                    & $\frac23\sqrt 3$ & $(4,0),(0,3)$\\
\hline
$12$ &$\frac{77}{20}\sqrt{10}\pi^2$& $\left(\frac{8}{5}\right)^{1/4}$& $(1,3),(3,2)$\\
\hline
$13$ & $8\sqrt 3\pi^2$& $ 3^{1/4}\sqrt 2$                           &$(6,1),(0,2)$\\
\hline
$14$   & $15\pi^2$ & $\frac13 \sqrt {15}$                                         &$(5,0),(0,3)$\\
\hline
$15$   & $16\pi^2$ & $2$\, \textup{or} $1$ &   $(8,0),(0,2)$ \textup{or}\, $(4,0),(0,4)$\\
\hline
\end{tabular}
\end{center}

We also see in the table above that the unit square is a maximiser for $k=1,4$ and $k=15$. We conjecture that the unit square is a maximiser if the maximising pair of modes are given by $(2^n,0),(0,2^n):n\in \Z^+$. This gives that the unit square is a maximiser for $\mu_k$ if
\begin{equation*}
k=\sum_{l\in \Z^+}\lfloor (4^n-l^2)_+^{1/2} \rfloor +2^n-1, \, n\in \Z^+.
\end{equation*}

The heuristic explanation of \eqref{i4} is that for Neumann eigenvalues on a rectangle $R\subset \R^2$,
\begin{equation*}
\mu_k(R)=\frac{4\pi k}{|R|}-\frac{2\pi^{1/2}\textup{Per}(R)k^{1/2}}{|R|^{3/2}}+o(k^{1/2}),\ k \rightarrow \infty,
\end{equation*}
so that the maximising rectangle with measure $|R|$ is the one that minimises its perimeter, i.e. the square with measure $|R|$.

The key ingredient in the proof of \eqref{i4} in Section \ref{The1} below is to show that $\limsup_{k\rightarrow \infty} b_k^*<\infty.$ This is more involved than the corresponding proof of Antunes and Freitas that $\limsup_{k\rightarrow \infty} a_k^*<\infty$ for the minimising rectangles of the Dirichlet eigenvalues. In particular, it requires an a priori bound on $\limsup b_k^*/k^{1/2}$ with some constant which, for technical reasons, has to be sufficiently small. This is achieved in Lemma \ref{lem3}. The number theoretical estimates are also more involved, and will be given in Lemma \ref{lem1}.

In Section~\ref{Per}, we turn our attention to the optimisation of Neumann eigenvalues on rectangles with a perimeter constraint. Generally, these problems are not well-posed (see Section~\ref{Per} for a discussion). Thus, we consider
the following variational problems
\begin{equation}\label{bbg04}
\sup\{\mu_k (R) : R \mbox{ rectangle,} \; \textup{Per} (R)=4\},
\end{equation}
and
\begin{equation}\label{bbg05}
\inf\{\mu_k (R) : R \mbox{ rectangle,} \; \textup{Per} (R)=4\}.
\end{equation}
In Subsection~\ref{S3.1}, we consider problem \eqref{bbg04} and we prove that for $k \in \N$,
there is a unique maximising rectangle for $\mu_k$ that collapses to a segment as $k \rightarrow \infty$.
In Subsection~\ref{S3.2}, we show that for $k=1$ problem \eqref{bbg05} does not have a solution,
while for $k \geq 2$ it does and any sequence of minimising rectangles converges to the unit square in the sense of
Hausdorff as $k \rightarrow \infty$.

\section{Proof of Theorem \ref{the1}}\label{The1}
\noindent{\it Proof of Theorem \ref{the1}(i).} Fix $k \in \N$. Suppose that $\{R_{b^{(\ell)}}\}_{\ell \in \N}$ is a maximising sequence for
$\mu_{k}$ such that $b^{(\ell)} \rightarrow \infty$ as $\ell \rightarrow \infty$. Then, for sufficiently large $\ell$,
\begin{equation*}
\mu_{k}(R_{b^{(\ell)}}) = \frac{\pi^{2}k^{2}}{(b^{(\ell)})^{2}},
\end{equation*}
and so $\mu_{k}(R_{b^{(\ell)}}) \rightarrow 0$ as $\ell \rightarrow \infty$. On the other hand, we have that $b=1$
for a square and so $\mu_{k}\geq \pi^{2} >0$. This contradicts the assumption
that $\{R_{b^{(\ell)}}\}_{\ell \in \N}$ is a maximising sequence for $\mu_{k}$. Thus any maximising
sequence $\{R_{b^{(\ell)}}\}_{\ell \in \N}$ for $\mu_{k}$ is such that $b^{(\ell)}$ remains bounded. Hence there exists a convergent subsequence, again denoted by
$b^{(\ell)}$, such that $b^{(\ell)}\rightarrow b_k^*$ for some $b_k^*\in[1,\infty)$. Since $b\mapsto \mu_{k}(R_b)$ is
continuous, $\mu_k(R_{b^{(\ell)}})\rightarrow \mu_k(R_{b_k^*})$ as $\ell \rightarrow \infty$. Hence $R_{b_k^*}$ is a maximiser.

In order to prove Theorem \ref{the1}(ii), we need three lemmas that will be given below.
\begin{lemma}\label{lem2} Let
$\nu_k=\mu_k(R_1)$ be the $k$'th positive
Neumann eigenvalue for the unit square in $\R^2$. Then
\begin{equation}\label{e17}
\nu_k \ge 4 \pi k -16(\pi k)^{1/2},\, k  \ge 1.
\end{equation}
\end{lemma}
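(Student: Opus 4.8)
The plan is to count Neumann eigenvalues of the unit square that do not exceed a given value, and compare this count with a lattice-point count to which the Gauss circle problem estimate applies. Recall that the Neumann eigenvalues of $R_1$ are $\mu_{p,q}=\pi^2(p^2+q^2)$ for $(p,q)\in(\Z^+)^2$. Hence, for $\lambda>0$, the number of eigenvalues (with multiplicity) that are $\le\lambda$ equals the number of lattice points $(p,q)$ with $p,q\ge 0$ and $p^2+q^2\le \lambda/\pi^2$. The idea is to relate this to the full lattice-point count $N(r):=\#\{(p,q)\in\Z^2: p^2+q^2\le r^2\}$ for the disc of radius $r=(\lambda/\pi^2)^{1/2}$: a point in the first quadrant (including the axes) corresponds to $\tfrac14$ of a point generically, with boundary corrections coming from the two half-axes and the origin. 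Concretely, one has the exact identity relating the first-quadrant count to $\tfrac14\bigl(N(r)+2\lfloor r\rfloor+1\bigr)$ or something very close to it; I would write this out carefully.

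First I would set $\lambda=\nu_k$, so that by definition there are exactly $k$ positive eigenvalues $\le \nu_k$ (counting multiplicity), i.e. $k+1$ lattice points in the closed quarter-disc of radius $r_k=(\nu_k/\pi^2)^{1/2}$ once we include the origin. Rearranging the quarter-disc/full-disc relation then gives $k$ in terms of $N(r_k)$ and $\lfloor r_k\rfloor$. Next I would invoke the elementary (non-sharp) bound $|N(r)-\pi r^2|\le C r$ for $r\ge 1$ — this is all that is needed here, no Huxley-type exponent — to obtain $k = \tfrac{\pi}{4}r_k^2 + O(r_k)$, and since trivially $r_k=O(k^{1/2})$ this becomes $k=\tfrac{\pi}{4}r_k^2+O(k^{1/2})$, equivalently $\pi^2 r_k^2 = 4\pi k + O(k^{1/2})$. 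One then has to turn this into the one-sided bound \eqref{e17} with the explicit constant $16$. For that I would track the inequalities carefully rather than use $O$-notation: using $N(r)\ge \pi r^2 - (\text{something like }2\sqrt2\, r)$ together with the exact combinatorial relation, one gets $\nu_k=\pi^2 r_k^2 \ge 4\pi k - c(\pi k)^{1/2}$, and the constant works out to be at most $16$ after being a little wasteful in the estimates.

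The main obstacle, or at least the only place that requires genuine care, is getting the explicit constant $16$ rather than merely an $O(k^{1/2})$ remainder: one must choose the crude circle-problem inequality with a concrete constant (e.g. each lattice point owns a unit square, the union of those squares contains the disc of radius $r-\tfrac{1}{\sqrt2}$ and is contained in the disc of radius $r+\tfrac{1}{\sqrt2}$, giving $|N(r)-\pi r^2|\le \sqrt2\,\pi r + \pi$), and then propagate it through the quarter-disc bookkeeping and the solution of the resulting quadratic inequality in $r_k$ without losing too much. A secondary nuisance is handling the two half-axes and the diagonal-symmetry correction exactly, and checking the small-$k$ cases (say $k=1,2$) by hand so the stated range $k\ge1$ is legitimate; for $k=1$ one has $\nu_1=\pi^2$ and $4\pi-16\pi^{1/2}<0<\pi^2$, so the inequality is trivially true there and the asymptotic argument only needs to be pushed for $k$ large. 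I would organise the write-up so that the asymptotic lattice-point estimate does the work for large $k$ and a finite check disposes of the rest.
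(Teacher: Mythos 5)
Your overall architecture is the paper's: bound the eigenvalue counting function of the unit square from above by a lattice-point count, convert the $\nu_k^{1/2}$ error term into $k^{1/2}$ via P\'olya's inequality $\nu_k\le 4\pi k$, and dispose of small $k$ by hand (indeed for $k\le 5$ the right-hand side of \eqref{e17} is negative, so only $k\ge 6$ needs the counting argument). The gap is in the one step you yourself flag as delicate: the explicit constant. Passing through the full-disc Gauss bound $N(r)\le\pi\bigl(r+2^{-1/2}\bigr)^2=\pi r^2+\sqrt2\,\pi r+\tfrac{\pi}{2}$ and the exact relation $Q(r)=\tfrac14N(r)+\lfloor r\rfloor+\tfrac34$ for the closed quarter-disc count $Q(r)$ gives
\begin{equation*}
k\le Q(r_k)-1\le\frac{\pi r_k^2}{4}+\Bigl(\frac{\sqrt2\,\pi}{4}+1\Bigr)r_k+\frac{\pi}{8}-\frac14 ,\qquad r_k=\frac{\nu_k^{1/2}}{\pi},
\end{equation*}
so the coefficient of $r_k$ is $\tfrac{\sqrt2\,\pi}{4}+1\approx 2.111$, and after applying $\nu_k\le4\pi k$ the constant in front of $(\pi k)^{1/2}$ comes out as $2\sqrt2\,\pi+8\approx16.89>16$. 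The favourable lower-order terms you gain by solving the quadratic in $r_k$ exactly only rescue the inequality for $k$ up to a few hundred, so with your stated circle-problem bound the claim fails for all large $k$; the assertion that ``the constant works out to be at most $16$'' is not correct for this choice.

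The repair is to bound the quarter-disc count directly rather than quartering the full-disc bound. Each lattice point $(x,y)$ with $x\ge1$, $y\ge1$ and $x^2+y^2\le r^2$ owns the open unit square with upper-right vertex $(x,y)$; these disjoint squares all lie inside the quarter disc, so the off-axis count is at most the area $\tfrac{\pi r^2}{4}$ with \emph{no} linear correction, and the only linear term is the exact axis count $2\lfloor r\rfloor\le 2r$. This yields $N(\nu;1)\le\frac{\nu}{4\pi}+\frac{2\nu^{1/2}}{\pi}$, coefficient $2$ instead of $2.111$, and hence exactly $8\cdot 2=16$ after P\'olya. Two smaller slips to fix in the write-up: for a lower bound on $\nu_k$ you need an \emph{upper} bound $N(r)\le\pi r^2+\cdots$ (the inequality $N(r)\ge\pi r^2-2\sqrt2\,r$ you quote at the key moment points the wrong way; that direction gives P\'olya's inequality, not \eqref{e17}); and the number of positive eigenvalues $\le\nu_k$ is at least $k$, not exactly $k$, when $\nu_k$ is degenerate --- fortunately $k\le N(\nu_k;1)$ is the direction you need and it always holds.
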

\begin{proof}
The cases $k=1,2,\dots, 5$ hold true by direct computation. Let us assume that $k\ge 6$.
For the unit square we have by \eqref{e3} that
\begin{equation*}
N(\nu;1)=2\bigg\lfloor \frac{\nu^{1/2}}{\pi} \bigg\rfloor+\big\vert\left\{ (x,y) \in \N^{2} : x^2 + y^{2} \leq \nu/\pi^2 \right\}\big\vert.
\end{equation*}
Let $\nu >2$. For each lattice point in $\N^2$ (i.e. $x \ge 1, y\ge 1$) satisfying $x^2+y^2\le \nu/\pi^2$ there exists an open lower left-hand square with vertices $(x,y),(x-1,y),(x-1,y-1),(x,y-1)$
inside the quarter circle with radius $\nu^{1/2}/\pi$ in the first quadrant. Hence
\begin{equation*}
N(\nu;1)\le\frac{\nu}{4\pi}+\frac{2\nu^{1/2}}{\pi}.
\end{equation*}
So for $\nu=\nu_k$ we have that
\begin{equation}\label{e20}
k\le\frac{\nu_k}{4\pi}+\frac{2\nu_k^{1/2}}{\pi}.
\end{equation}
We note that \eqref{e20} also holds in case $\nu_k$ has multiplicity larger than $1$.
Since the unit square tiles $\R^2$, we have by P\'olya's Inequality, \cite{P},
that $\nu_k\le 4\pi k$. Hence
\begin{equation*}
k\le\frac{\nu_k}{4\pi}+\frac{2(4\pi k)^{1/2}}{\pi}.
\end{equation*}
This implies \eqref{e17}.
\end{proof}

\begin{lemma}\label{lem1} For all $\mu>0,\, b>0$, define the counting function
\begin{equation}\label{e3}
N(\mu;b) = \bigg\vert\left\{ (x,y) \in ({ \Z^{+}})^{2} \setminus \{(0,0)\} : \frac{\pi^2x^{2}}{b^{2}} +\pi^{2}b^{2}y^{2} \leq \mu \right\}\bigg\vert.
\end{equation}
Then for all $\mu>0,\, b>0$ with $\frac{\mu^{1/2}}{b\pi}\ge2$ we have that
\begin{equation}\label{e2}
N(\mu;b) \ge \frac{\mu}{4\pi} + \frac{b\mu^{1/2}}{2\pi} - \frac{b^{3/2}\mu^{1/4}}{(2\pi)^{1/2}} -1.
\end{equation}
\end{lemma}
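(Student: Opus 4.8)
The plan is to count lattice points $(x,y)\in(\Z^+)^2\setminus\{(0,0)\}$ lying in the filled quarter-ellipse
$E=\{(x,y):x\ge0,\,y\ge0,\ \pi^2x^2/b^2+\pi^2b^2y^2\le\mu\}$
by comparing with the area of $E$. Write $\alpha=\mu^{1/2}/(b\pi)$ for the $x$-semiaxis and $\beta=b\mu^{1/2}/\pi$ for the $y$-semiaxis, so the area of $E$ is $\tfrac14\pi\alpha\beta=\mu/(4\pi)$. The idea is the standard one: to each lattice point $(x,y)$ in $E$ with $x\ge1$ and $y\ge0$ (or symmetrically $x\ge0$, $y\ge1$) associate a unit square with a corner at $(x,y)$ pushed toward the origin; these squares are disjoint and, after accounting for the ones that stick out past the ellipse near the axes, fill up most of $E$. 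I would in fact count in two strips separately — the points on the $x$-axis ($y=0$, $1\le x\le\alpha$), which number $\lfloor\alpha\rfloor\ge\alpha-1$, and the points with $y\ge1$ — and then add the boundary correction.

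More precisely, for each $x\in\{0,1,\dots,\lfloor\alpha\rfloor\}$ let $f(x)=\beta(1-x^2/\alpha^2)^{1/2}$ be the height of the ellipse over $x$; the number of lattice points $(x,y)$ in $E$ with that $x$-coordinate and $y\ge1$ is $\lfloor f(x)\rfloor$ if $f(x)\ge1$ and $0$ otherwise, hence at least $f(x)-1$ when $f(x)\ge1$. Summing over $x$ and comparing $\sum_x f(x)$ with $\int_0^\alpha f(x)\,dx=\tfrac14\pi\alpha\beta$ via monotonicity of $f$ on $[0,\alpha]$ gives $\sum_{x=0}^{\lfloor\alpha\rfloor}f(x)\ge\int_0^\alpha f=\mu/(4\pi)$ (a decreasing function overestimates its integral by left endpoints). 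Adding the $\lfloor\alpha\rfloor\ge\alpha-1=\mu^{1/2}/(b\pi)-1$ points on the $x$-axis and subtracting the error terms (at most one "$-1$" per column for the floor, for at most $\lfloor\alpha\rfloor+1$ columns, plus the columns where $f(x)<1$ contribute nothing rather than $f(x)-1$) produces a bound of the shape $\mu/(4\pi)+b\mu^{1/2}/(2\pi)-(\text{correction})$. Wait — I should be careful that the $x$-axis points give the $b\mu^{1/2}/(2\pi)$ term only if combined correctly with a symmetric count; rather, I would simply note $\lfloor\alpha\rfloor\ge\alpha-1$ contributes $\mu^{1/2}/(b\pi)-1$ and the column sum already contains a further $\beta$-order piece, and reconcile the constants at the end. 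The hypothesis $\alpha=\mu^{1/2}/(b\pi)\ge2$ guarantees that at least the columns $x=0,1$ have $f(x)\ge1$, so the "floor $\ge$ value $-1$" step is not vacuous and the number of exceptional columns is controlled.

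The main obstacle is bookkeeping the error terms to land exactly on the stated bound $-b^{3/2}\mu^{1/4}/(2\pi)^{1/2}-1$ rather than something weaker. The $-1$ is harmless, but the term $-b^{3/2}\mu^{1/4}/(2\pi)^{1/2}$ is of order $(\alpha\beta)^{1/2}\sim\sqrt{\text{area}}$, which is the size of the boundary contribution, so I must bound the number of "bad" columns (those near $x=\alpha$ where $0\le f(x)<1$, and the accumulated $\lfloor\cdot\rfloor$ losses) by $O(\sqrt{\alpha\beta})$ with the right constant. The clean way: the columns with $f(x)<1$ satisfy $x>\alpha(1-1/\beta^2)^{1/2}$, so there are at most $\alpha/\beta^2\cdot\text{const}$... no — at most roughly $\alpha(1-(1-\beta^{-2})^{1/2})\le\alpha/\beta^2$ of them when $\beta\ge1$, which is far smaller than $\sqrt{\alpha\beta}$ and hence absorbed; the genuinely $\sqrt{\alpha\beta}$-sized loss comes only if one is sloppier than left-endpoint Riemann comparison. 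So in fact the honest bound should have error $O(\alpha)=O(\mu^{1/2}/b)$ plus the $\lfloor\alpha\rfloor$ versus $\alpha$ slack, and I expect the stated $b^{3/2}\mu^{1/4}$ term is a deliberately crude but convenient upper bound for these lower-order errors valid in the regime of interest (where $b$ is bounded and $\mu\sim k$); I would verify $\alpha+1\le b^{3/2}\mu^{1/4}/(2\pi)^{1/2}+\tfrac12$ or similar under $\alpha\ge2$ to close the gap cleanly. Verifying that inequality elementarily, after fixing the precise column-by-column count, is the one place requiring genuine care.
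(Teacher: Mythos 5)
Your setup is the same as the paper's (count column by column, bound each floor by its argument minus one, compare the column sum with the area of the quarter-ellipse), but the decisive step is missing, and the bookkeeping you propose cannot produce the stated inequality. With your left-endpoint comparison $\sum_{x=0}^{\lfloor\alpha\rfloor}f(x)\ge\int_0^\alpha f=\mu/(4\pi)$, the per-column floor losses total $\lfloor\alpha\rfloor+1$, which exactly consumes the $\lfloor\alpha\rfloor$ axis points you add back; the net result is only $N\ge\mu/(4\pi)-O(1)$, i.e.\ the entire second term $b\mu^{1/2}/(2\pi)=\beta/2$ has been discarded. That term is precisely the \emph{excess} of the left-endpoint Riemann sum over the integral, which for a decreasing function is merely nonnegative; to get it with the constant $\tfrac12$ one must use that $f$ is \emph{concave}. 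The paper does exactly this: it writes the deficit as the area between the arc and the inscribed rectangles, majorises $f$ on each unit interval by its tangent line so that each cell contributes about half of the drop $f(n)-f(n+1)$, telescopes to $\tfrac12 f(0)$, and controls the leftover endpoint and last-partial-cell terms by a small convexity argument in the fractional part $R-\lfloor R\rfloor$. You gesture at ``a further $\beta$-order piece'' in the column sum but never isolate or prove it, and without it the lemma — whose whole purpose in Section 2 is the positive $b\mu^{1/2}/(2\pi)$ term, used in \eqref{e16} to bound $b_k^*$ — is not established.

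Your reading of the third term is also mistaken in a way that would block you from closing the argument. One has $b^{3/2}\mu^{1/4}/(2\pi)^{1/2}=b^2(\alpha/2)^{1/2}$ with $\alpha=\mu^{1/2}/(\pi b)$; it is not a ``deliberately crude'' container for the $O(\alpha)$ floor losses, and the inequality you propose to verify, $\alpha+1\le b^{3/2}\mu^{1/4}/(2\pi)^{1/2}+\tfrac12$, is false for fixed $b$ and large $\mu$ (the left side grows like $\mu^{1/2}$, the right like $\mu^{1/4}$). The $O(\alpha)$ floor losses can only be absorbed by cancellation against the axis points, as above; the term $b^{3/2}\mu^{1/4}/(2\pi)^{1/2}$ is the genuine size of the fractional-part correction in the trapezoid estimate (the last partial column alone contributes $\sim(R-\lfloor R\rfloor)^{3/2}(2R)^{1/2}$ with $R=\alpha$), and this is where the hypothesis $\alpha\ge2$ is actually needed. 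Two smaller points: your $\alpha$ is the semiaxis in the $y$-direction, not the $x$-direction (harmless relabelling), and the worry about columns with $f(x)<1$ is unnecessary since $\lfloor z\rfloor\ge z-1$ holds for all $z\ge0$.
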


To prove Lemma~\ref{lem1}, we obtain a lower bound for the number of integer lattice points in $\N^2$ that are inside or on the ellipse
\begin{equation*}
E(\mu) = \left\{(x,y) \in \R^{2} : \frac{\pi^2x^{2}}{b^{2}} + \pi^{2}b^{2}y^{2} \leq \mu \right\}.
\end{equation*}
\begin{proof}
For each $(x,y) \in E(\mu)$, we have that
\begin{equation*}
x \leq \frac{b}{\pi}(\mu - \pi^{2}b^{2}y^{2})_{+}^{1/2} = b^{2}\left(\frac{\mu}{\pi^{2}b^{2}} - y^{2}\right)_{+}^{1/2}.
\end{equation*}
Then
\begin{align*}
N(\mu;b) &= \left \lfloor{\frac{\mu^{1/2}}{\pi b}}\right \rfloor + \left \lfloor{\frac{b\mu^{1/2}}{\pi}}\right \rfloor
+\sum_{y \in \mathbb{N}} \left \lfloor{b^{2}\left(\frac{\mu}{\pi^2b^{2}} - y^{2}\right)_{+}^{1/2}}\right \rfloor \notag\\
&= \left \lfloor{\frac{\mu^{1/2}}{\pi b}}\right \rfloor + \left \lfloor{\frac{b\mu^{1/2}}{\pi}}\right \rfloor
+\sum_{y=1}^{\left \lfloor{\mu^{1/2}/(\pi b)}\right \rfloor} \left \lfloor{b^{2}\left(\frac{\mu}{\pi^2b^{2}} - y^{2}\right)^{1/2}}\right \rfloor \notag\\
&\geq \left \lfloor{\frac{\mu^{1/2}}{\pi b}}\right \rfloor + \left \lfloor{\frac{b\mu^{1/2}}{\pi}}\right \rfloor
+\sum_{y=1}^{\left \lfloor{\mu^{1/2}/(\pi b)}\right \rfloor} b^{2}\left(\frac{\mu}{\pi^2b^{2}} - y^{2}\right)^{1/2}
-\left \lfloor{\frac{\mu^{1/2}}{\pi b}}\right \rfloor \notag\\
&= \left \lfloor{\frac{b\mu^{1/2}}{\pi}}\right \rfloor
+b^{2}\sum_{y=1}^{\left \lfloor{\mu^{1/2}/(\pi b)}\right \rfloor} \left(\frac{\mu}{\pi^2b^{2}} - y^{2}\right)^{1/2}.
\end{align*}
Let $R=\frac{\mu^{1/2}}{\pi b}$ and define $f(y):=(R^{2} - y^{2})^{1/2},\, 0\le y \le R$.
Then $\sum_{y=1}^{\lfloor{R} \rfloor} (R^{2} - y^{2})^{1/2}$ is the area
of the rectangles that are inscribed in the first quadrant of the circle of radius $R$. Hence, we can rewrite this as
\begin{equation}\label{e6}
\sum_{y=1}^{\left \lfloor{R}\right \rfloor} (R^{2} - y^{2})^{1/2} = \frac{\pi R^{2}}{4} - A,
\end{equation}
where
\begin{align}\label{e7}
A=\sum_{n=0}^{\lfloor R \rfloor-1}\int_{n}^{n+1}(f(y)-f(n+1))dy+\int _{\lfloor R \rfloor}^R f(y)dy.
\end{align}
Since $\lfloor R \rfloor\le y \le R$ we have that
\begin{align}\label{e8}
\int _{\lfloor R \rfloor}^R f(y)dy&= \int_{\lfloor R\rfloor}^R(R-y)^{1/2}(R+y)^{1/2}\, dy\nonumber \\ &\le (2R)^{1/2}\int_{\lfloor R\rfloor}^R(R-y)^{1/2}\, dy
\nonumber \\ &= \frac23 (R-\lfloor R \rfloor)^{3/2} (2R)^{1/2}.
\end{align}
Since $f$ is decreasing and concave, we have that
\begin{equation*}
f(y)\le f(n)+ (y-n)f'(n),\, n \le y \le n+1.
\end{equation*}
Hence
\begin{equation}\label{e10}
\sum_{n=0}^{\lfloor R \rfloor-1}\int_{n}^{n+1}(f(y)-f(n+1))dy\le f(0)-f(\lfloor R\rfloor)+\frac12\sum_{n=0}^{\lfloor R \rfloor-1}f'(n).
\end{equation}
Since
\begin{equation*}
f'(y)= -\frac{y}{(R^{2}-y^{2})^{1/2}},
\end{equation*}
$f'(0)=0$, and $y\mapsto -f'(y)$ is increasing, we have that
\begin{equation}\label{e12}
\frac12\sum_{n=0}^{\lfloor R \rfloor-1}f'(n)\le \frac12 \int_0^{\lfloor R\rfloor -1}f'(y)dy=\frac12(f(\lfloor R\rfloor -1)-f(0)).
\end{equation}
By \eqref{e6}-\eqref{e8}, \eqref{e10}, and \eqref{e12}
\begin{align*}
\sum_{y=1}^{\left \lfloor{R}\right \rfloor} (R^{2} - y^{2})^{1/2} &\ge \frac{\pi R^{2}}{4}-\frac{R}{2}+(R^2-\lfloor R\rfloor^2)^{1/2}-\frac12(R^2-(\lfloor R\rfloor-1)^2)^{1/2}
-\frac23 (R-\lfloor R \rfloor)^{3/2} (2R)^{1/2}.
\end{align*}
Next note that
\begin{equation*}
\frac12(R^2-(\lfloor R\rfloor-1)^2)^{1/2}\le \frac12(R-\lfloor R\rfloor +1)^{1/2}(2R)^{1/2},
\end{equation*}
and that for $R\ge 2$,
\begin{equation*}
(R^2-\lfloor R\rfloor^2)^{1/2}\ge (R-\lfloor R \rfloor)^{1/2}\bigg(\frac{R+\lfloor R\rfloor}{2R}\bigg)^{1/2}(2R)^{1/2}\ge \left(\frac{5}{6}\right)^{1/2}(R-\lfloor R \rfloor)^{1/2}(2R)^{1/2}.
\end{equation*}
Let $\beta=R-\lfloor R\rfloor\in [0,1]$ and define $ g:[0,1]\mapsto \R$ by
\begin{equation*}
g(\beta)=-\bigg(\frac{5}{6}\bigg)^{1/2}\beta^{1/2}+\frac12(\beta+1)^{1/2}+\frac23\beta^{3/2}.
\end{equation*}
Then
\begin{equation*}
g''(\beta)=\frac14\bigg(\frac{5}{6}\bigg)^{1/2}\beta^{-3/2}-\frac18(\beta+1)^{-3/2}+\frac12\beta^{-1/2}>0,
\end{equation*}
and so $g$ is convex. Hence $g(\beta)\le \max\{g(0), g(1)\}=\frac12.$
So for $R=\frac{\mu^{1/2}}{\pi b}\ge2$ we have that
\begin{align}\label{e14}
N(\mu;b) &\geq \left \lfloor{\frac{b\mu^{1/2}}{\pi}}\right \rfloor + b^{2}\left(\frac{\pi R^{2}}{4}
-\frac{R}{2} -  \frac{R^{1/2}}{2^{1/2}}\right) \notag\\
&= \left \lfloor{\frac{b\mu^{1/2}}{\pi}}\right \rfloor + \frac{\mu}{4\pi} - \frac{b\mu^{1/2}}{2\pi}
- \frac{b^{3/2}\mu^{1/4}}{(2\pi)^{1/2}}  \notag\\
&\geq \frac{\mu}{4\pi} + \frac{b\mu^{1/2}}{2\pi} - \frac{b^{3/2}\mu^{1/4}}{(2\pi)^{1/2}} -1.
\end{align}
\end{proof}

 Below we obtain an a priori upper bound on  the longest side $b_k^*$ of a maximising rectangle in terms of $k$.

\begin{lemma}\label{lem3} We have that
\begin{equation}\label{e23}
\limsup_{{k \rightarrow \infty}} \frac{b_k^*}{k^{1/2}} \le 0.46359.
\end{equation}
\end{lemma}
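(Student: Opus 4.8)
\noindent\emph{Proof strategy.} The unit square is admissible in \eqref{i1}, so $\mu_k^{*}\ge\nu_k$, and by Lemma~\ref{lem2} this forces $\mu_k(R_{b_k^{*}})\ge 4\pi k-16(\pi k)^{1/2}$; if $b_k^{*}$ grew too fast the long thin rectangle $R_{b_k^{*}}$ would be forced to carry more than $k$ eigenvalues below that level. As a preliminary reduction one checks that $b_k^{*}\le k^{1/2}$ for all large $k$: if $b>k^{1/2}$ then by \eqref{ii} we have $\mu_{k,0}=\pi^{2}k^{2}b^{-2}<\pi^{2}b^{2}=\mu_{0,1}$ while every mode $(p,q)$ with $q\ge1$ has eigenvalue $\ge\pi^{2}b^{2}$, so the $k$ smallest positive eigenvalues of $R_b$ are $\mu_{1,0}<\dots<\mu_{k,0}$ and $\mu_k(R_b)=\pi^{2}k^{2}b^{-2}<\pi^{2}k<4\pi k-16(\pi k)^{1/2}\le\nu_k\le\mu_k^{*}$ once $k$ is large (here $\pi<4$), so such a $b$ is not a maximiser.

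Fix $k$ large and set $\mu=4\pi k-16(\pi k)^{1/2}-1$. By Lemma~\ref{lem2}, $\mu<\nu_k\le\mu_k(R_{b_k^{*}})$, so fewer than $k$ positive eigenvalues of $R_{b_k^{*}}$ lie in $(0,\mu]$; with $N$ as in \eqref{e3} this says $N(\mu;b_k^{*})\le k-1$. Since $\pi^{2}(b_k^{*})^{2}\le\pi^{2}k<\mu$, the points $(p,0)$ for $1\le p\le\lfloor b_k^{*}\mu^{1/2}/\pi\rfloor$ and $(p,1)$ for $0\le p\le\lfloor b_k^{*}(\mu-\pi^{2}(b_k^{*})^{2})^{1/2}/\pi\rfloor$ all lie in $E(\mu)$ and differ from $(0,0)$, so
\begin{equation*}
N(\mu;b_k^{*})\ \ge\ \Big\lfloor\frac{b_k^{*}\mu^{1/2}}{\pi}\Big\rfloor+\Big\lfloor\frac{b_k^{*}(\mu-\pi^{2}(b_k^{*})^{2})^{1/2}}{\pi}\Big\rfloor+1 .
\end{equation*}
Only the two rows $y=0,1$ of the ellipse are used; the refined bound \eqref{e2} does not help here, because its extra term $b\mu^{1/2}/(2\pi)$ is cancelled by its range of validity $\mu\ge 4\pi^{2}b^{2}$. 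Combining the two estimates for $N(\mu;b_k^{*})$ and using $\lfloor x\rfloor>x-1$ twice yields
\begin{equation*}
b_k^{*}\Big(\mu^{1/2}+(\mu-\pi^{2}(b_k^{*})^{2})^{1/2}\Big)\ <\ \pi k .
\end{equation*}

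To extract the constant, write $c_k=b_k^{*}k^{-1/2}\in(0,1]$. Dividing the last inequality by $k$ and using $\mu=4\pi k(1+o(1))$ gives $c_k\big(2+\sqrt{4-\pi c_k^{2}}\,\big)<\sqrt{\pi}+o(1)$ as $k\to\infty$. Setting $C=\limsup_k c_k$ and passing to a subsequence along which $c_k\to C$ gives $C\big(2+\sqrt{4-\pi C^{2}}\,\big)\le\sqrt{\pi}$. A short computation shows that $t\mapsto t\big(2+\sqrt{4-\pi t^{2}}\,\big)$ is strictly increasing on $[0,\sqrt{2/\pi}\,]$, stays above $\sqrt{\pi}$ on $[\sqrt{2/\pi}\,,1]$, and exceeds $\sqrt{\pi}$ at $t=0.46359$ (equivalently, the root in $(0,1)$ of $t(2+\sqrt{4-\pi t^{2}})=\sqrt{\pi}$, i.e.\ of $\pi t^{4}-4\sqrt{\pi}\,t+\pi=0$, lies below $0.46359$), so $C<0.46359$, which is \eqref{e23}.

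The main difficulty is obtaining a lower bound for $N(\mu;b_k^{*})$ that is strong enough: keeping only the row $y=0$ gives merely $\limsup_k b_k^{*}k^{-1/2}\le\pi^{1/2}/2\approx0.886$, which is too crude for the number-theoretic estimates used afterwards, so one genuinely needs the $y=1$ contribution, and the delicate point is to verify that the threshold it produces indeed sits below the required value $0.46359$.
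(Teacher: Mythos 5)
Your proof is correct and takes essentially the same route as the paper: both arguments bound from below the number of lattice points in the rows $y=0$ and $y=1$ of the ellipse at height $\approx 4\pi k-16(\pi k)^{1/2}$, compare this count with $k-1$, and arrive at the identical limiting inequality $\alpha\bigl(2+\sqrt{4-\pi\alpha^{2}}\bigr)\le\sqrt{\pi}$, whose root gives $0.46359$. Your direct count via $N(\mu;b)$ is a tidier packaging of the paper's argument, which reaches the $y=1$ contribution through a case analysis (using a derivative/multiplicity argument when all of the first $k+1$ modes are of the form $(p,0)$) and an auxiliary index $\overline{p}$; your preliminary reduction $b_k^{*}\le k^{1/2}$ plays the role of the paper's first estimate $\limsup_k c_k\le(\pi/4)^{1/2}$, and either suffices to guarantee that the $y=1$ row is nonempty.
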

\begin{proof}
Define $c_k:=\frac{b_k^*}{k^{1/2}}$. We shall bound $c_k$ using the maximality of $\mu_k^*$ at $R_{b_k^*}$.
We first note that
\begin{equation}\label{e25}
\limsup_{{k \rightarrow \infty}} c_k \le \bigg(\frac{\pi}{4}\bigg)^{1/2}.
\end{equation}
Indeed, we know by \eqref{ii} that the eigenvalues of $R_{b_k^*}$ are of the form
$$ \mu_{p,q}= \frac{\pi^2 p^2}{c_k^2 k}+ \pi^2 q^2 c_k^2 k,$$
for $p,q \in \mathbb Z^+$.
 Choosing the pairs $(p,q)$ in \eqref{ii} as $(0,0), (1,0),\dots, (k,0)$  we get that
$$\frac {\pi^2 k^2}{c_k^2 k} \ge \mu_k^* \ge \nu_k.$$
This gives by Lemma \ref{lem2} that for $k\ge 6$,
$$c_k^2 \le \frac{\pi^2 k}{4 \pi k -16(\pi k)^{1/2}},$$
which passing to the limit leads to \eqref{e25}.

Assume now that for some $k$ (large), all of the eigenvalues of $R_{b_k^*}$ up to index $k$ are given by the pairs $(p,q) = (0,0), (1,0),\dots, (k,0)$. If this is the case, then we see
that $\mu_k^*$ has to be (at least) double, and hence equal to some value of the form
$$\frac{\pi^2 p^2}{c_k^2 k}+ \pi^2 q^2 c_k^2 k$$
for some $q\ge 1$. Indeed, if it is not double, then being simple, for a small variation of $b$ around $b_k^*$ it continues to be simple and we can perform the derivative of the mapping
$$b \mapsto \mu_k(R_b),$$
in $b_k^*$. This derivative equals $- \frac{2 \pi ^2 k^2}{(b_k^*)^3}$ which is not vanishing, in contradiction with the optimality of $b_k^*$.

So, either the first $k+1$ eigenvalues are not given by $(p,q) = (0,0), (1,0),\dots, (k,0)$, or the value of $\mu_k^*$ is equal to some $\frac{\pi^2 p^2}{c_k^2 k}+ \pi^2 q^2 c_k^2 k$, for $q \ge 1$. In both cases, there exists some $p$ such that one of the first $k+1$ eigenvalues is given by $(p,1)$. Let $\overline p$ be the smallest number such that
\begin{equation}\label{bbg02}
\frac{\pi^2 \overline p^2}{c_k^2 k}+ \pi^2 c_k^2 k \ge 4 \pi k -16(\pi k)^{1/2},
\end{equation}
and $(\overline p, 1)$ does not produce an eigenvalue of the list $\mu_0(R_{b_k^*}), \dots, \mu_k(R_{b_k^*})$.

Then all eigenvalues given by the pairs $(0,1), \dots, (\overline p -1, 1)$ belong to the list $\mu_0(R_{b_k^*}), \dots, \mu_k(R_{b_k^*})$. Now, we consider the eigenvalues given by the pairs
$$(0,0),(1,0),\dots, (k-\overline p +1, 0).$$
We conclude that the eigenvalue given by the last pair $(k-\overline p +1, 0)$ is not smaller than $\mu_k^*$. Consequently
\begin{equation}\label{bbg03}
\frac{\pi^2 (k-\overline p +1)^2}{c_k^2 k} \ge \mu_k^* \ge 4 \pi k -16(\pi k)^{1/2}.
\end{equation}
From \eqref{bbg02} and \eqref{bbg03}, we get, respectively
$$\pi \overline p \ge c_k (4\pi k^2 -16\pi^{1/2} k^{3/2} -\pi ^2 c_k^2 k^2)^{1/2}$$
$$\pi (k-\overline p +1) \ge c_k (4\pi k^2 -16\pi^{1/2} k^{3/2})^{1/2}. $$
Adding the two inequalities, dividing by $k$ and passing to the limit for ${k \rightarrow \infty}$, we obtain that, for any limit point $\alpha \in [0,(\pi/4)^{1/2}]$ of the sequence $(c_k)_k$,
$$\pi \ge \alpha ((4\pi)^{1/2} + (4\pi -\pi ^2 \alpha ^2)^{1/2}).$$
A numerical evaluation, gives that $\alpha \in [0, 0.46359]$.
\end{proof}

We now prove that $\limsup b_k^*<\infty$.
Since \eqref{e14} holds for all pairs $(\mu,b)$, it must also hold
for all optimal pairs $(\mu_k^*,b_k^*)$. Furthermore, we note that $\mu\mapsto N(\mu;b)$ is increasing. Then, $\mu_k^*$ being optimal and having finite multiplicity, we have for all $\epsilon\in (0,\nu_1)$ that
\begin{equation*}
k-1\ge N(\mu_k^*-\epsilon;b_k^*)\ge N(\nu_k-\epsilon;b_k^*).
\end{equation*}
By Lemmas \ref{lem2} and \ref{lem3}, we have that for all $\epsilon>0$ sufficiently small
\begin{equation*}
\limsup_{k\rightarrow\infty}\frac{(\nu_k-\epsilon)^{1/2}}{\pi b_k^*}\ge 2.
\end{equation*}
So invoking Lemma \ref{lem1}, for all $k$ sufficiently large, we obtain that
\begin{equation*}
k-1\ge N(\nu_k-\epsilon;b_k^*)\ge \frac{\nu_k-\epsilon}{4\pi} + \frac{b_k^*(\nu_k-\epsilon)^{1/2}}{2\pi} - \frac{(b_k^*)^{3/2}(\nu_k-\epsilon)^{1/4}}{(2\pi)^{1/2}} -1.
\end{equation*}
Rearranging terms we have that
\begin{equation}\label{e16}
\frac{4\pi k-\nu_k+\epsilon}{(\nu_k-\epsilon)^{1/2}}\ge 2b_k^*(1-(2\pi b_k^*)^{1/2}(\nu_k-\epsilon)^{-1/4}).
\end{equation}
By Lemma \ref{lem2}, we conclude that
\begin{equation}\label{e22}
\limsup_{k\rightarrow \infty} (\nu_k-\epsilon)^{-1/2}(4\pi k-\nu_k+\epsilon)\le 8.
\end{equation}
On the other hand, Lemma \ref{lem3} gives that
\begin{equation}\label{e26}
\liminf_{k\rightarrow \infty}(1-(2\pi b_k^*)^{1/2}(\nu_k-\epsilon)^{-1/4})\ge 1-\pi^{1/4}(0.46359)^{1/2}.
\end{equation}
Putting \eqref{e16}, \eqref{e22} and \eqref{e26} together gives that
\begin{equation}\label{e27}
\limsup_{k\rightarrow\infty}b_k^*\le \frac{4}{1-\pi^{1/4}(0.46359)^{1/2}}\le 43.
\end{equation}

\noindent{\it Proof of Theorem \ref{the1}(ii).}
Let
\begin{equation*}
N_0(\mu;b) = \bigg\vert\left\{ (x,y) \in \Z^{2} : \frac{\pi^{2}x^2}{b^{2}} + \pi^{2}b^{2}y^{2} \leq \mu \right\}\bigg\vert.
\end{equation*}
Then
\begin{equation}\label{e29}
N(\mu;b)=\frac14N_0(\mu;b)+\frac12\bigg\lfloor \frac{b\mu^{1/2}}{\pi}\bigg\rfloor+\frac12\bigg\lfloor \frac{\mu^{1/2}}{\pi b}\bigg\rfloor-\frac14.
\end{equation}
We apply the identity above to the optimal pair $(b_k^*,\mu_k^*)$, and obtain that if $\mu_k^*$ has multiplicity $\Theta_k$ then
\begin{align}\label{e30}
k+\Theta_k-1= N(\mu_k^*;b_k^*)&=\frac14N_0(\mu_k^*;b_k^*)+\frac12\bigg\lfloor \frac{{b_k^*}(\mu_k^*)^{1/2}}{\pi}\bigg\rfloor+\frac12\bigg\lfloor \frac{(\mu_k^*)^{1/2}}{b_k^*\pi}\bigg\rfloor-\frac14\nonumber \\ &\ge\frac14N_0(\mu_k^*;b_k^*)+ \frac{b_k^*(\mu_k^*)^{1/2}}{2\pi}+ \frac{(\mu_k^*)^{1/2}}{2\pi b_k^*}-\frac54.
\end{align}
By \eqref{e27}, we have that the $b_k^*$ are bounded uniformly in $k$. It is known by \cite {Hux} that there exist constants $C<\infty$ and, for any $\epsilon >0$ , $\frac12< \theta < \frac{131}{208} + \epsilon$ such that
\begin{equation}\label{e31}
\frac{\mu}{\pi}+C\mu^{\theta/2}+1\ge N_0(\mu;b)\ge \frac{\mu}{\pi}-C\mu^{\theta/2}.
\end{equation}
So by \eqref{e30} and \eqref{e31} we conclude that
\begin{equation*}
b_k^*+\frac{1}{b_k^*}\le \frac{4\pi k-\mu_k^*}{2(\mu_k^*)^{1/2}}+\frac{\pi C}{2}(\mu_k^*)^{(\theta-1)/2}+\frac{2\pi\Theta_k}{(\mu_k^*)^{1/2}}+\frac12,
\end{equation*}
where we have used that $\mu_k^*\ge \mu_1^*=\pi ^2$.
We observe that $\mu\mapsto \frac{4\pi k-\mu}{2\mu^{1/2}}+\frac{\pi C}{2\mu^{(1-\theta)/2}}+\frac{2\pi\Theta_k}{\mu^{1/2}}$ is decreasing. By the optimality of $\mu_k^*$, we have that
\begin{equation*}
b_k^*+\frac{1}{b_k^*}\le \frac{4\pi k-\nu_k}{2\nu_k^{1/2}}+\frac{\pi C}{2\nu_k^{(1-\theta)/2}}+\frac{2\pi\Theta_k}{\nu_k^{1/2}}+\frac12.
\end{equation*}
By \eqref{e29} and \eqref{e31}, we have that
\begin{equation}\label{e34}
k\le N(\nu_k;1)\le\frac{\nu_k}{4\pi}+\frac{\nu_k^{1/2}}{\pi}+\frac{C\nu_k^{\theta/2}}{4}.
\end{equation}
It follows that
\begin{equation*}
b_k^*+\frac{1}{b_k^*}\le 2+O(k^{(\theta-1)/2}),
\end{equation*}
and
\begin{equation*}
b_k^*=1+O(k^{(\theta-1)/4}).
\end{equation*}
This completes the proof of Theorem \ref{the1}(ii).
\qed

\noindent{\it Proof of Theorem \ref{the1}(iii).}
First, we obtain a lower bound for $\mu_k^*$. By its maximality we have that $\mu_k^*\ge \nu_k$, and so it suffices to obtain a lower bound for the latter.
By \eqref{e34}, we have that
\begin{equation*}
k\le \frac{\nu_k}{4\pi}+\left(\frac{4k}{\pi}\right)^{1/2}+ O(k^{\theta/2}),
\end{equation*}
where we have used P\'olya's Inequality $\nu_k\le 4\pi k$.
This proves the lower bound in \eqref{i3} since $(1+\theta)/4> \theta/2.$

To prove the upper bound we have by \eqref{e30}, \eqref{e31} and \eqref{i4} that
\begin{align*}
N(\mu_k^*;b_k^*)&\ge \frac{\mu_k^*}{4\pi}+\frac{b_k^*(\mu_k^*)^{1/2} }{2\pi}+ \frac{(\mu_k^*)^{1/2}}{2\pi b_k^*}-\frac{C}{4}(\mu_k^*)^{\theta/2}-\frac54\nonumber \\ &
\ge \frac{\mu_k^*}{4\pi}+\frac{b_k^*\nu_k^{1/2} }{2\pi}+ \frac{\nu_k^{1/2}}{2\pi b_k^*}-\frac{C}{4}(4\pi k)^{\theta/2}-\frac54\nonumber \\ &
=\frac{\mu_k^*}{4\pi}+\frac{\nu_k^{1/2}}{\pi}+ \nu_k^{1/2}O(k^{(\theta -1)/4})+O(k^{\theta/2}),
\end{align*}
where we have used the optimality of $\mu_k^*$ and P\'olya's Inequality: $\nu_k\le \mu_k^*\le 4\pi k$.
By Lemma \ref{lem2} and P\'olya's Inequality, we have that $\nu_k^{1/2}= (4\pi k)^{1/2}+O(1)$. This shows that, since $\theta<1$,
\begin{equation}\label{c2}
N(\mu_k^*;b_k^*)\ge \frac{\mu_k^*}{4\pi}+ \left(\frac{4k}{\pi}\right)^{1/2} +O(k^{(\theta+1)/4}).
\end{equation}
We note that the multiplicity $\Theta_k$ of $\mu_k^*$ is equal to the number of lattice points in the first quadrant lying on the curve
\begin{equation}\label{c3}
\frac{\pi^{2}x^{2}}{(b_k^*)^{2}} + \pi^{2}(b_k^*)^{2}y^{2}= \mu_k^*.
\end{equation}
The latter multiplicity is bounded by Theorem 1 in \cite{J}, and is of order $O(\ell^{2/3})$, where $\ell$ is the length of the curve defined in \eqref{c3},
which in turn equals $O((\mu_k^*)^{1/2})=O(k^{1/2})$. So the multiplicity of $\mu_k^*$ is bounded by $O(k^{1/3})$.
It follows by \eqref{c2} that
\begin{equation*}
O(k^{1/3})+k\ge \frac{\mu_k^*}{4\pi}+ \left(\frac{4k}{\pi}\right)^{1/2} +O(k^{(\theta+1)/4}).
\end{equation*}
This completes the proof of Theorem \ref{the1}(iii) since $\frac13<(1+\theta)/4$.

\section{Neumann eigenvalues with a perimeter constraint}\label{Per}

In general, the problems of maximising or minimising $\mu_k$ under a perimeter constraint are ill-posed. In fact, it is not
 difficult to see that for every $c >0$
 \begin{equation}\label{BBG}\inf\{\mu_k(\Omega) : \textup{$\Omega$ open, bounded with } \textup{Per} (\Omega)= c\}= 0,\end{equation}
\begin{equation}\label{bbg00}\sup\{\mu_k(\Omega) : \textup{$\Omega$ open, bounded with } \textup{Per} (\Omega)= c\}= +\infty.
\end{equation}
Indeed, the $k$th eigenvalue of a set $\Omega$ that is the disjoint union of $k+1$ balls is equal to $0$, so that the infimum under \eqref{BBG} is attained trivially. One can also construct a minimising sequence of connected sets where the $k$th eigenvalue tends to zero, for example, by connecting $k+1$ fixed disjoint balls with $k$ tubes of vanishing width (see \cite{Ar95}), while controlling the overall perimeter by rescaling.

For the maximisation problem, we construct the following example in $\R^2$. Let $\Lambda>0$ be arbitrary, and let $l>0$ be such that $l < \frac c4$, and $\frac{\pi^2}{l^2}\ge \Lambda$.  Let $\Omega$ be the square with vertices $(0,0), (l,0), (l,-l), (0, -l)$. Then $\mu_1(\Omega) = \frac{\pi^2}{l^2}$. Consider the function $\phi :\R\rightarrow \R$ defined by $\phi(x)= C\sin (\frac{2\pi}{l} x)$, where $C$ is such that  $\int_0^l \sqrt{1+(\phi'(x))^2} dx = c-3l$. We replace the edge between the first two vertices by the graph of the function $\frac 1n \phi (nx)$. In this way, we construct a set $\Omega_{n,l}$ with $\textup{Per}(\Omega_{n,l})=c.$ The sets $\Omega_{n,l}$ satisfy a uniform cone condition so that $\mu_1(\Omega_{n,l}) \rightarrow \mu_1(\Omega)=\frac{\pi^2}{l^2}$ as $n \rightarrow +\infty$. Hence for all $n$ sufficiently large $\mu_1(\Omega_{n,l})\ge\frac{\pi^2}{2l^2}\ge \frac{\Lambda}{2}$.
Since $\Lambda>0$ was arbitrary the supremum under \eqref{bbg00} is $+\infty$. The above example is easily extended to dimensions larger than $2$. We refer the reader to \cite{JMA} for related constructions.

Below we obtain some results for the variational problems \eqref{bbg04}, \eqref{bbg05}
with a perimeter constraint. We let $R_{a,b}$ denote a rectangle in $\R^2$ of side-lengths $a,b>0$
so that $\textup{Per} (R_{a,b})=2(a+b)$.

\subsection{Analysis of the maximisation problem \eqref{bbg04}.}\label{S3.1}
Our main theorem is the following.
\begin{theorem}\label{the2}
For $k \in \N$, there is a unique maximising rectangle $R_{a_k^*,b_k^*}$ with $a_k^* = \frac{2}{k+1} \in (0,1]$ and $b_k^*=2-a_k^*$
such that
\begin{equation*}
 \mu_k (R_{a_k^*,2-a_k^*}) =\frac{\pi^2 k^2}{(2-a_k^*)^2}= \frac{\pi^2}{(a_k^*)^2}=\frac{\pi^2(k+1)^2}{4},
\end{equation*}
i.e. $\mu_k^* = \mu_k (R_{a_k^*,2-a_k^*})$ is realised by the modes $(k,0)$ and $(0,1).$
\end{theorem}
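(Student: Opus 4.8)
The plan is to reduce the maximisation problem over all rectangles with perimeter $4$ to an analysis of the two competing modes that can realise $\mu_k$, and then to show that the optimal balance of these two modes forces $a_k^* = \tfrac{2}{k+1}$. Writing $R_{a,b}$ with $a+b=2$ and, say, $a\le b$ (so $a\in(0,1]$, $b=2-a$), the Neumann eigenvalues are $\{\tfrac{\pi^2 p^2}{b^2}+\tfrac{\pi^2 q^2}{a^2}: p,q\in\Z^+\}$ in increasing order. First I would observe that, since $\textup{Per}$ is fixed but the measure $ab$ is not, we should expect the maximiser to be a long thin rectangle: as $a\to 0$ with $b=2-a\to 2$, the purely one-dimensional modes $(p,0)$ contribute eigenvalues $\tfrac{\pi^2 p^2}{b^2}$ which stay bounded, but the modes $(0,q)$ contribute $\tfrac{\pi^2 q^2}{a^2}\to\infty$. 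So to make $\mu_k$ (the $k$-th \emph{positive} eigenvalue, i.e. index $k$ in the list starting at $\mu_0=0$) as large as possible, one wants $a$ as small as possible \emph{subject to} there still being at least $k$ eigenvalues below the candidate value; pushing $a$ too small makes the single mode $(0,1)$ jump above too many of the $(p,0)$ modes and overshoots the index.

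The key step is a counting/indexing argument. Fix the candidate optimal configuration: the maximised value should be realised simultaneously by a mode of type $(p,0)$ and a mode of type $(0,1)$, for then the value is locked and cannot be increased by a small perturbation of $a$ (this is exactly the double-eigenvalue phenomenon exploited in Lemma~\ref{lem3}: if $\mu_k$ were simple at the optimum, $b\mapsto\mu_k(R_{a,2-a})$ would have nonzero derivative there). So I would look for $a,b$ with $a+b=2$ and some $p\in\N$ such that
\begin{equation*}
\frac{\pi^2 p^2}{b^2}=\frac{\pi^2}{a^2},
\end{equation*}
equivalently $b=pa$, hence $a=\tfrac{2}{p+1}$, $b=\tfrac{2p}{p+1}$, and the common value is $\tfrac{\pi^2(p+1)^2}{4}$. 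Now I count how many eigenvalues lie at or below this value $V:=\tfrac{\pi^2(p+1)^2}{4}$. The modes $(0,0),(1,0),\dots,(p,0)$ all have $\tfrac{\pi^2 j^2}{b^2}\le \tfrac{\pi^2 p^2}{b^2}=V$, giving $p+1$ eigenvalues; the mode $(0,1)$ gives the $(p+1)$-st positive one (tying with $(p,0)$); and one must check that \emph{no other} mode $(x,y)$ with $y\ge 1$, or $(x,0)$ with $x>p$, sneaks in at or below $V$ — this is where $a=\tfrac{2}{p+1}$ being the smallest side (so $b=2-a\ge 1$, i.e. $p\ge 1$) is used, together with the fact that $\tfrac{\pi^2}{a^2}=V$ already forces any mode with $y\ge 1$ other than $(0,1)$ to exceed $V$, and $(p+1,0)$ gives $\tfrac{\pi^2(p+1)^2}{b^2}>V$ since $b<2$... actually $b=\tfrac{2p}{p+1}<2$, so $\tfrac{\pi^2(p+1)^2}{b^2}>\tfrac{\pi^2(p+1)^2}{4}=V$, good. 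Hence $\mu_k(R_{a,2-a})=V$ with $k=p$, i.e. $p=k$, $a_k^*=\tfrac{2}{k+1}$, $b_k^*=2-a_k^*=\tfrac{2k}{k+1}$, and $\mu_k^*=\tfrac{\pi^2(k+1)^2}{4}$.

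Finally I would prove this configuration is the \emph{unique} maximiser. For the upper bound: for any admissible $R_{a,2-a}$ with $a\le 1$, I claim $\mu_k(R_{a,2-a})\le \tfrac{\pi^2(k+1)^2}{4}$. Indeed the $k$-th positive eigenvalue is at most the $(k+1)$-st smallest value in the full list including $\mu_0=0$; restricting attention to the $k+2$ modes $(0,0),(1,0),\dots,(k,0),(0,1)$, at least one of $\tfrac{\pi^2 k^2}{(2-a)^2}$ and $\tfrac{\pi^2}{a^2}$ is $\ge V$ unless $b=ka$ (by monotonicity: the first is increasing in $a$, the second decreasing), and a short case analysis shows $\mu_k\le\max\{\min(\tfrac{\pi^2 k^2}{(2-a)^2},\tfrac{\pi^2}{a^2}),\dots\}$ is maximised precisely at $b=ka$; any $a\ne\tfrac{2}{k+1}$ either leaves fewer than $k$ eigenvalues below $V$ or has its $k$-th eigenvalue strictly below $V$, strictness coming from the strict monotonicity of $a\mapsto\tfrac{\pi^2}{a^2}$ and $a\mapsto\tfrac{\pi^2 k^2}{(2-a)^2}$. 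The main obstacle, and the step needing genuine care rather than a one-line argument, is this last bookkeeping: controlling \emph{all} modes $(x,y)$ simultaneously to rule out any interloper below $V$ for nearby $a$, and thereby getting both the exact index $k=p$ and the uniqueness; everything else (the formula for the common value, the double-multiplicity obstruction to perturbation) is routine given Lemma~\ref{lem3}'s technique.
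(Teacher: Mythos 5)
Your construction and verification of the candidate $a=\frac{2}{k+1}$ is correct and complete: at that value the positive modes with eigenvalue at most $V=\frac{\pi^2(k+1)^2}{4}$ are exactly $(1,0),\dots,(k,0),(0,1)$, so $\mu_k=\mu_{k+1}=V$ there, which gives the lower bound $\mu_k^*\ge V$ (the paper's \eqref{bbg06a}) and, as a by-product, the paper's first step $a_k^*\ge\frac{2}{k+1}$. The gap is in the other half, namely showing that no $a>\frac{2}{k+1}$ does as well. Your key claim there, that at least one of $\frac{\pi^2k^2}{(2-a)^2}$ and $\frac{\pi^2}{a^2}$ is $\ge V$ unless $b=ka$, is true but does not bound $\mu_k$ from above: for $a>\frac{2}{k+1}$ the quantity exceeding $V$ is $\frac{\pi^2k^2}{(2-a)^2}$, and $\mu_k\le V$ is then equivalent to there being at least $k$ positive modes $(x,y)$ --- including mixed modes with $x\ge1$, $y\ge1$ --- with eigenvalue $\le V$. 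Your sublist $(0,0),\dots,(k,0),(0,1)$ only certifies $\mu_k\le\max\{\frac{\pi^2(k-1)^2}{(2-a)^2},\frac{\pi^2}{a^2}\}$ in that regime, which already exceeds $V$ once $a>\frac{4}{k+1}$; and near $a=1$ essentially all of the first $k$ eigenvalues come from mixed modes, so the bookkeeping you defer to ``a short case analysis'' is the entire content of the proof rather than a routine check.

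The paper closes exactly this gap with a quantitative input absent from your proposal: P\'olya's inequality $\mu_k(R_{a,2-a})\le\frac{4\pi k}{a(2-a)}$. Combined with $\mu_k^*\ge V$ this forces $a_k^*(2-a_k^*)\le\frac{16}{\pi(k+1)}$, hence $a_k^*\le 1-\sqrt{1-16/(\pi(k+1))}$ for $k\ge5$, which in turn (i) shows the mode realising $\mu_k^*$ has $q\le1$ (after separately excluding $q=2$), and (ii) via the $\bar p$-counting argument shows the only mode with $y\ge1$ among the first $k+1$ is $(0,1)$, pinning the maximiser down to the tie between $(k,0)$ and $(0,1)$ and hence to $a_k^*=\frac{2}{k+1}$. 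The cases $k=1,2,3,4$, for which that estimate is vacuous, are then handled by explicit enumeration of the competing modes --- another piece missing from your write-up. Your perturbation/double-eigenvalue observation is the right mechanism and is the one the paper uses, but without the P\'olya bound the control of interloping mixed modes cannot be completed.
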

\begin{proof}
We first show that for every $k \ge 0$, problem \eqref{bbg04} has a solution.

Fix $k \in \Z^+$ and let $(R_{a_n, 2-a_n})_n$, $a_n \in (0,1]$, be a maximising sequence of rectangles for $\mu_k$.
By taking a monotone subsequence if necessary, we may assume that $(a_n)_n$ converges. Let $a_k^*=\lim_{n\rightarrow\infty} a_n$.
Now, we claim that
\begin{equation}\label{bbg05a}
a_k^* \geq \frac{2}{k+1}.
\end{equation}

Suppose to the contrary that $a_k^* < \frac{2}{k+1}$. Then we have that
\begin{equation*}
\frac{\pi^2 k^2}{(2-a_k^*)^2} < \frac{\pi^2}{(a_k^*)^2},
\end{equation*}
where the right-hand side above is $+\infty$ in the case that $a_k^*=0$.
Hence, the $k$ eigenvalues that are given by the pairs
$(1,0),(2,0), \dots ,(k,0)$ are smaller than the eigenvalue
that is given by the pair $(0,1)$. So $\mu_k^* = \frac{\pi^2 k^2}{(2-a_k^*)^2}$.
However, if we consider $\tilde{a}_k \in (a_k^* ,\frac{2}{k+1})$, then
\begin{equation*}
\mu_{k}(\tilde{a}_k) = \frac{\pi^2 k^2}{(2-\tilde{a}_k)^2} > \frac{\pi^2 k^2}{(2-a_k^*)^2},
\end{equation*}
which contradicts the maximality of $\mu_k^*$. This proves \eqref{bbg05a}.

For $a_k = \frac{2}{k+1}$, we have that
\begin{equation*}
\mu_k(R_{a_k,2-a_k})= \frac{\pi^2 k^2}{(2-\frac{2}{k+1})^2} =\frac{\pi^2 (k+1)^2}{4}.
\end{equation*}
So, by maximality, we deduce that
\begin{equation}\label{bbg06a}
\mu_k (R_{a_k^*, 2-a_k^*}) \ge \frac{\pi^2 (k+1)^2}{4}.
\end{equation}

Let
\begin{equation*}
\mu_k (R_{a_k^*,2-a_k^*}) = \frac{\pi^2 p^2}{(2-a_k^*)^2} + \frac{\pi^2 q^2}{(a_k^*)^2},
\end{equation*}
for some $(p,q) \in (\Z^{+})^2$, $p+q \leq k$.

Below we show that $q\le 2.$ Suppose to the contrary that $q \geq 3$. Then, by P\'{o}lya's Inequality and since $a_k^* \in (0,1]$, we have that
\begin{equation*}
\frac{9\pi^2}{(a_k^*)^2} \leq \mu_k (R_{a_k^*,2-a_k^*}) \leq \frac{4\pi k}{a_k^*(2-a_k^*)} \leq \frac{4\pi k}{a_k^*},
\end{equation*}
which implies that
\begin{equation*}
a_k^* \geq \frac{9\pi}{4k}.
\end{equation*}
Hence, we have that
\begin{equation*}
\mu_k^* \leq \frac{4\pi k}{a_k^*} \leq \frac{16k^2}{9} < \frac{\pi^2 (k+1)^2}{4}.
\end{equation*}
This contradicts \eqref{bbg06a}. So, for all $k \in \Z^+$, $\mu_k^*$ has $q \leq 2$.

Now we consider the case where $q=2$, and note that
\begin{equation*}
\frac{4\pi^2}{(a_k^*)^2} > \frac{\pi^2}{(2-a_k^*)^2} + \frac{\pi^2}{(a_k^*)^2},
\end{equation*}
since $a_k^* \in (0, 1]$. This shows that the eigenvalues given by the pairs $(0,1)$ and $(1,1)$ are strictly smaller than the one given by the pair $(0,2)$.
Below we will show that the eigenvalues given by the pairs  $(0,0),(1,0), \dots, (k-2,0)$ are also strictly smaller than the eigenvalue given by the pair $(0,2)$.
By \eqref{bbg06a} and by P\'{o}lya's Inequality, we have that
\begin{equation*}
\frac{\pi^2 (k+1)^2}{4} \leq \mu_k (R_{a_k^*,2-a_k^*}) \leq \frac{4\pi k}{a_k^*(2-a_k^*)},
\end{equation*}
which implies that
\begin{align}
a_k^*(2-a_k^*) &\leq \frac{16}{\pi (k+1)} \label{bbg13}
\end{align}
Since $a_k^*(2-a_k^*)\le 1$, we see that \eqref{bbg13} does not give any information about $a_k^*$ for $k=1,2,3,4.$
We first consider the case $k\ge 5$.
By solving \eqref{bbg13}, and taking into account that $a_k^*\le 1,$ we have that
\begin{equation}\label{bbg17}
a_k^* \leq 1- \sqrt{1-(16/\pi (k+1))}.
\end{equation}
We wish to show that
\begin{equation}\label{bbg17a}
\frac{4\pi^2}{(a_k^*)^2}> \frac{\pi^2(k-2)^2}{(2-a_k^*)^2}.
\end{equation}
This is equivalent to showing that $a_k^*<\frac4k$. The latter is clearly satisfied if $1- \sqrt{1-(16/\pi (k+1))}<\frac4k$. After elementary arithmetic, we see that this is equivalent to
\begin{equation}\label{bbg17b}
k>\frac{\pi}{\pi-2}+\frac{2\pi}{(\pi-2)k}.
\end{equation}
Since $k\ge 5,$ we have that the right-hand side of \eqref{bbg17b} is bounded from above by $\frac{7\pi}{5(\pi-2)}<5$. So \eqref{bbg17a} holds for $k\ge 5$.
So the eigenvalues that are given by the pairs $(0,0),(1,0), \dots, (k-2,0),(0,1),(1,1)$
are all strictly smaller than the one that is given by the pair $(0,2)$, and there are $k+1$ of them.
Hence $\mu_k^*$ cannot have $q=2$. Thus $q=0$ or $q=1$.

Either $q=0$ and $p=k$, $\mu_k^*=\frac{\pi^2 k^2}{(2-a_k^*)^2}$ and the first $k+1$ eigenvalues
are given by the pairs $(0,0), (1,0), \dots, (k,0)$. In this case, $\mu_k^*$ cannot
be simple. Otherwise, the derivative of the mapping $a \mapsto \mu_k(R_{a,2-a})$ with respect to $a$ would be
non-vanishing as before, thus contradicting the maximality of $\mu_k^*$. Hence $\mu_k^* = \frac{\pi^2}{(a_k^*)^2}$,
i.e. $\mu_k^*$ is realised by the modes $(k,0)$ and $(0,1)$.

Or one of the first $k+1$ eigenvalues is given by a pair $(p,1), p \in \Z^+$.
Let $\bar {p}$ be the smallest number such that
\begin{equation*}
\frac{\pi^2 \bar{p}^2}{(2-a_k^*)^2} + \frac{\pi^2}{(a_k^*)^2} > \mu_k^*.
\end{equation*}
Then all eigenvalues given by the pairs $(0,1), (1,1), \dots , (\bar{p}-1,1)$ are in the list
$$\mu_{0}(R_{a_k^*,2-a_k^*}), \mu_{1}(R_{a_k^*,2-a_k^*}), \dots , \mu_{k}(R_{a_k^*,2-a_k^*}).$$
By considering the eigenvalues given by the pairs $(0,0), (1,0), \dots, (k-\bar{p}+1,0)$,
we deduce that
\begin{equation*}
\frac{\pi^2(k-\bar{p}+1)^2}{(2-a_k^*)^2} \geq \mu_k^* \geq \frac{\pi^2 (k+1)^2}{4}.
\end{equation*}
Thus we have that
\begin{equation*}
\bar{p}\le \frac12(k+1)a_k^*,
\end{equation*}
which, together with \eqref{bbg17}, gives that
\begin{align}\label{bbg23}
\bar{p}&\le \frac12(k+1)(1- \sqrt{1-(16/\pi (k+1))})\nonumber \\ &=
\frac{8}{\pi}\bigg(1+\bigg(1-\frac{16}{\pi(k+1)}\bigg)^{1/2}\bigg)^{-1}.
\end{align}
The right-hand side of \eqref{bbg23} is decreasing in $k$. So for $k\ge 5$ we have that the right-hand side of \eqref{bbg23}
is bounded from above by $\frac{8}{\pi}\bigg(1+\bigg(1-\frac{8}{3\pi}\bigg)^{1/2}\bigg)^{-1}<2.$
Hence $\bar{p}=1$, since $\bar{p} \in \N$. Therefore $p=0$.
So $p=0$ and the first $k+2$ eigenvalues are given by the pairs $(0,0), (1,0), \dots, (k,0), (0,1)$.
Then, as before, $\mu_k^*=\frac{\pi^2 k^2}{(2-a_k^*)^2} = \frac{\pi^2}{(a_k^*)^2}$, since in either case
$\mu_k^*$ cannot be simple, i.e. $\mu_k^*$ is realised by the modes $(k,0)$ and $(0,1)$.

It remains to deal with the cases $k=1,2,3,4$.

Let $a_1 \in (0,1]$. Then
\begin{equation*}
\mu_1(R_{a_1,2-a_1}) = \frac{\pi^2 p^2}{(2-a_1)^2}+\frac{\pi^2 q^2}{a_1^2}
\end{equation*}
for either the pair $(1,0)$ or the pair $(0,1)$. Since $a_1 \in (0,1]$,
$\mu_1(R_{a_1,2-a_1})=\frac{\pi^2}{(2-a_1)^2}$. This is maximal for $a_1=1$.
Hence $\mu_1^* = \pi^2$ with $a_1^* = 1$ and corresponding modes $(1,0),(0,1)$.

Let $a_2 \in (0,1]$. Then
\begin{equation*}
\mu_2(R_{a_2,2-a_2}) = \frac{\pi^2 p^2}{(2-a_2)^2}+\frac{\pi^2 q^2}{a_2^2},
\end{equation*}
with $p\leq 2, q \leq 2$ and $p+q \leq 2$.
The possible pairs that give $\mu_2(R_{a_2,2-a_2})$ are
$$(2,0),(1,0),(1,1),(0,1),(0,2).$$
Now $\mu_1(R_{a_1,2-a_1}) = \frac{\pi^2}{(2-a_1)^2}$ is given by the pair $(1,0)$.
So $\mu_2(R_{a_2,2-a_2})$ must be given by either $(2,0)$ or $(0,1)$.
We have that
\begin{equation*}
\frac{4\pi^2}{(2-a_2)^2} \leq \frac{\pi^2}{a_2^2} \iff a_2 \leq \frac23,
\end{equation*}
hence
\begin{equation*}
\mu_2(R_{a_2,2-a_2}) =
\begin{cases}
\frac{4\pi^2}{(2-a_2)^2}, & 0<a_2\leq \frac23,\\
\frac{\pi^2}{a_2^2}, & \frac23 \leq a_2 \leq 1.
\end{cases}
\end{equation*}
Thus we obtain that $\mu_2^* = \frac{9\pi^2}{4}$ with $a_2^* = \frac23$ and
corresponding modes $(2,0),(0,1)$.

Let $a_3 \in (0,1]$. Then
\begin{equation*}
\mu_3(R_{a_3,2-a_3}) = \frac{\pi^2 p^2}{(2-a_3)^2}+\frac{\pi^2 q^2}{a_3^2},
\end{equation*}
with $p\leq 3, q \leq 2$ and $p+q\leq 3$.
The possible pairs that give $\mu_3(R_{a_3,2-a_3})$ are
$$(3,0),(2,0),(1,0),(2,1),(1,1),(0,1),(1,2),(0,2).$$

For $0<a_2\leq \frac23$, $\mu_2(R_{a_2,2-a_2}) = \frac{4\pi^2}{(2-a_2)^2}$ is given by the pair $(2,0)$.
So for $0<a_3\leq \frac23$, $\mu_3(R_{a_3,2-a_3})$ must be given by either $(3,0)$ or $(0,1)$.
We have that
\begin{equation*}
\frac{9\pi^2}{(2-a_3)^2} \leq \frac{\pi^2}{a_3^2} \iff a_3\leq \frac12.
\end{equation*}
In addition, for $\frac23 \leq a_2 \leq 1$, $\mu_2(R_{a_2,2-a_2}) = \frac{\pi^2}{a_2^2}$ is given by
the pair $(0,1)$. So for $\frac23 \leq a_3 \leq 1$, $\mu_3(R_{a_3,2-a_3})$ must be given by
either $(2,0)$ or $(1,1)$. We have that
\begin{equation*}
\frac{4\pi^2}{(2-a_3)^2} \leq \frac{\pi^2}{(2-a_3)^2} + \frac{\pi^2}{a_3^2} \iff a_3 \leq \sqrt{3}-1.
\end{equation*}
Thus, we obtain that
\begin{equation*}
\mu_3(R_{a_3,2-a_3}) =
\begin{cases}
\frac{9\pi^2}{(2-a_3)^2}, &0<a_3\leq \frac12,\\
\frac{\pi^2}{a_3^2}, &\frac12 \leq a_3 \leq \frac23,\\
\frac{4\pi^2}{(2-a_3)^2}, &\frac23 \leq a_3 \leq \sqrt{3}-1,\\
\frac{\pi^2}{(2-a_3)^2} + \frac{\pi^2}{a_3^2}, & \sqrt{3}-1 \leq a_3 \leq 1.
\end{cases}
\end{equation*}
We deduce that $\mu_3^* = 4\pi^2$ with $a_3^* = \frac12$ and corresponding modes $(3,0),(0,1)$.

Let $a_4 \in (0,1]$. Then
\begin{equation*}
\mu_4(R_{a_4,2-a_4}) = \frac{\pi^2 p^2}{(2-a_4)^2}+\frac{\pi^2 q^2}{a_4^2},
\end{equation*}
with $p\leq 4, q \leq 2$ and $p+q\leq 4$.
The possible pairs that give $\mu_4(R_{a_4,2-a_4})$ are
$$(4,0),(3,0),(2,0),(1,0),(3,1),(2,1),(1,1),(0,1),(2,2),(1,2),(0,2).$$

For $0<a_3\leq \frac12$, $\mu_3(R_{a_3,2-a_3}) = \frac{9\pi^2}{(2-a_3)^2}$
is given by the pair $(3,0)$. So for $0<a_4\leq \frac12$, $\mu_4(R_{a_4,2-a_4})$
must be given by either $(4,0)$ or $(0,1)$.
We have that
\begin{equation*}
\frac{16\pi^{2}}{(2-a_4)^2} \leq \frac{\pi^2}{a_4^2} \iff a_4\leq \frac25.
\end{equation*}
In addition, for $\frac12 \leq a_2,a_3 \leq \frac23$, $\mu_3(R_{a_3,2-a_3}) = \frac{\pi^2}{a_3^2}$
is given by the pair $(0,1)$, and $\mu_2(R_{a_2,2-a_2}) = \frac{4\pi^2}{(2-a_2)^2}$ is given by the
pair $(2,0)$. So for $\frac12 \leq a_4 \leq \frac23$, $\mu_4(R_{a_4,2-a_4})$
must be given by either $(3,0), (1,1)$ or $(0,2)$. We have that
\begin{align*}
\frac{9\pi^2}{(2-a_4)^2} \leq \frac{\pi^2}{(2-a_4)^2}+\frac{\pi^2}{a_4^2}
&\iff a_4 \leq \frac27 (\sqrt{8}-1),\\
\frac{9\pi^2}{(2-a_4)^2} \leq \frac{4\pi^2}{a_4^2}
&\iff a_4 \leq \frac45,\\
\frac{\pi^2}{(2-a_4)^2}+\frac{\pi^2}{a_4^2} \leq \frac{4\pi^2}{a_4^2}
&\Leftarrow \quad a_4 \in (0,1].
\end{align*}
For $\frac23 \leq a_3 \leq \sqrt{3}-1$, $\mu_3(R_{a_3,2-a_3}) = \frac{4\pi^2}{(2-a_3)^2}$
is given by the pair $(2,0)$. Similarly to the above, for $\frac23 \leq a_4 \leq \sqrt{3}-1$,
$\mu_4(R_{a_4,2-a_4})$ must be given by either $(3,0)$ or $(1,1)$.

Finally, for $\sqrt{3}-1 \leq a_3 \leq 1$, $\mu_3(R_{a_3,2-a_3}) = \frac{\pi^2}{(2-a_3)^2} + \frac{\pi^2}{a_3^2}$
is given by the pair $(1,1)$. So for $\sqrt{3}-1 \leq a_4 \leq 1$, $\mu_4(R_{a_4,2-a_4})$ must be given by $(2,0)$,
as $(1,0),(0,1),(1,1)$ have already been used for this range of $a$ by $\mu_1, \mu_2, \mu_3$ respectively.

Hence, we obtain that
\begin{equation*}
\mu_4(R_{a_4,2-a_4})=
\begin{cases}
\frac{16\pi^2}{(2-a_4)^2}, &0<a_4\leq \frac25,\\
\frac{\pi^2}{a_4^2}, &\frac25 \leq a_4 \leq \frac12,\\
\frac{9\pi^2}{(2-a_4)^2}, &\frac12 \leq a_4 \leq \frac27 (\sqrt{8}-1),\\
\frac{\pi^2}{(2-a_4)^2} + \frac{\pi^2}{a_4^2}, & \frac27(\sqrt{8}-1) \leq a_4 \leq \sqrt{3}-1,\\
\frac{4\pi^2}{(2-a_4)^2} &\sqrt{3}-1 \leq a_4 \leq 1.
\end{cases}
\end{equation*}
Thus $\mu_4^* = \frac{25\pi^2}{4}$ with $a_4^* = \frac25$ and corresponding modes $(4,0),(0,1)$.
\end{proof}

\subsection{Analysis of the minimisation problem \eqref{bbg05}.}\label{S3.2}

Our main result is the following.

\begin{theorem}\label{the3}
\begin{enumerate}
\item[\textup{(i)}] If $k=1$, then variational problem \eqref{bbg05} does not have a minimiser, and the infimum equals $\frac{\pi^2}{4}$.
\item[\textup{(ii)}] If $k\ge 2$, then variational problem \eqref{bbg05} does have a minimiser.
\item[\textup{(iii)}] If $k\ge 2$ and $R_{a_k^*,b_k^*},{ \, a_k^*\in(0,1],}\,b_k^* = 2-a_k^* $ are minimisers, then
\begin{equation*}
\lim_{k \rightarrow \infty} a_k^* =1,
\end{equation*}
i.e. any sequence of optimal rectangles for Problem \eqref{bbg05} converges to the unit square, as ${k \rightarrow \infty}$.
\end{enumerate}
\end{theorem}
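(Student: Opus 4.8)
The three parts are naturally handled in sequence, reusing the bookkeeping already developed for Theorem~\ref{the2}. For part (i), I would parametrise by $a\in(0,1]$ so that $R_{a,2-a}$ is the rectangle of perimeter $4$ with shortest side $a$. Since $\mu_1(R_{a,2-a})$ is given by the pair $(1,0)$ (the pair $(0,1)$ gives a larger value because $a\le 2-a$), one has $\mu_1(R_{a,2-a})=\pi^2/(2-a)^2$, which is strictly decreasing on $(0,1]$ and tends to $\pi^2/4$ as $a\to 0^+$; the value $\pi^2/4$ is not attained for any admissible rectangle, so the infimum is $\pi^2/4$ and there is no minimiser. For part (ii) with $k\ge 2$, I would argue that any minimising sequence $(a_n)$ stays bounded away from $0$: if $a_n\to 0$ then $R_{a_n,2-a_n}$ degenerates to a segment of length $2$, whose Neumann spectrum is $\{\pi^2 j^2/4: j\in\Z^+\}$, forcing $\mu_k(R_{a_n,2-a_n})\to \pi^2 k^2/4$, while for the unit square $\mu_k(R_{1,1})\le \pi^2 k^2/4$ with strict inequality for $k\ge 2$ (one can, for instance, exhibit $k$ modes $(p,q)\ne(0,0)$ all with eigenvalue strictly below $\pi^2 k^2/4$, e.g. using several modes with $q\ge 1$). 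Hence along a minimising sequence $a_n$ is bounded below, a subsequence converges to some $a_k^*\in(0,1]$, and continuity of $a\mapsto\mu_k(R_{a,2-a})$ gives that $R_{a_k^*,2-a_k^*}$ is a minimiser.

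For part (iii), the goal is $a_k^*\to 1$, i.e. the minimiser collapses to the unit square. I would use a two-term asymptotic comparison. The unit square satisfies, by \eqref{e34} and P\'olya's inequality (Lemma~\ref{lem2}), $\nu_k=4\pi k-8(\pi k)^{1/2}+o(k^{1/2})$, so $\mu_k(R_{1,1})=4\pi k+O(k^{1/2})$. For a general $R_{a,2-a}$ with $a$ bounded away from $0$, one has a Weyl-type lower bound: counting lattice points $(p,q)\in(\Z^+)^2$ with $\pi^2 p^2/(2-a)^2+\pi^2 q^2/a^2\le\mu$ and using the inscribed-rectangle/ellipse estimate exactly as in Lemma~\ref{lem1} (with the roles of the two semi-axes $a/\pi$ and $(2-a)/\pi$), one gets
\begin{equation*}
\mu_k(R_{a,2-a})\ge 4\pi k-2\pi^{1/2}\,\mathrm{Per}(R_{a,2-a})\,k^{1/2}+o(k^{1/2})=4\pi k-16\pi^{1/2}k^{1/2}+o(k^{1/2}),
\end{equation*}
but more usefully, refining this shows that the subleading coefficient is governed by the perimeter \emph{and} a term that blows up as $a\to 0$; concretely, if $a$ stays in a compact subset of $(0,1]$ then $\mu_k(R_{a,2-a})=4\pi k-2\pi^{1/2}(2-a+a)\cdot$\dots and the next correction, coming from $\lfloor \mu^{1/2}/(\pi a)\rfloor$-type boundary terms, is minimised only when $a=1$. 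The cleanest route is: suppose $a_k^*\not\to 1$, so along a subsequence $a_k^*\to a_\infty\in(0,1)$ (it cannot tend to $0$ by the degeneration argument in part (ii)); then by the refined two-term asymptotics $\mu_k(R_{a_k^*,2-a_k^*})=4\pi k-2\pi^{1/2}(\mathrm{Per})k^{1/2}+o(k^{1/2})$ with $\mathrm{Per}=4$ fixed, so the minimiser cannot beat the square to leading order, and the \emph{next} term decides: since $a=1$ uniquely minimises the relevant boundary contribution among all $a\in(0,1]$ with $\mathrm{Per}=4$ fixed, for $k$ large $\mu_k(R_{1,1})<\mu_k(R_{a_k^*,2-a_k^*})$, contradicting minimality.

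The main obstacle is making the second-order term in the counting function precise enough, and uniformly in $a$ on compact subsets of $(0,1]$, to see that $a=1$ is the unique optimiser — this is exactly the Neumann analogue of the heuristic $\mu_k(R)=4\pi k/|R|-2\pi^{1/2}\mathrm{Per}(R)k^{1/2}/|R|^{3/2}+o(k^{1/2})$, but now under a \emph{perimeter} constraint rather than an area constraint, so it is the area term that must be controlled. Here $|R_{a,2-a}|=a(2-a)$, which is maximised at $a=1$; so the two-term expansion $\mu_k(R_{a,2-a})=\tfrac{4\pi k}{a(2-a)}-\tfrac{8\pi^{1/2}k^{1/2}}{(a(2-a))^{3/2}}+o(k^{1/2})$ shows the leading term $4\pi k/(a(2-a))$ is already \emph{minimised} at $a=1$, so the argument in part (iii) only needs the leading term together with the fact that $a\mapsto a(2-a)$ has a strict interior maximum at $a=1$. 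Thus the real work is establishing this two-term (or even just one-term) asymptotic uniformly for $a$ away from $0$; the case $a\to 0$ is dispatched separately by the segment-degeneration bound from part (ii). Once that is in place, the contradiction argument closes part (iii) immediately.
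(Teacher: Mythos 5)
Part (i) of your proposal is essentially the paper's argument (one slip: $\pi^2/(2-a)^2$ is \emph{increasing} in $a$ on $(0,1]$, not decreasing; the conclusion that the infimum $\pi^2/4$ is the unattained limit as $a\to 0^+$ still stands). The substantive problems are in (ii) and (iii).

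In part (ii) your key claim, that $\mu_k(R_{1,1})<\pi^2k^2/4$ for all $k\ge 2$, is false. The positive square eigenvalues are $\pi^2,\pi^2,2\pi^2,4\pi^2,4\pi^2,\dots$, so $\mu_2(R_{1,1})=\pi^2=\pi^2\cdot 2^2/4$ and $\mu_4(R_{1,1})=4\pi^2=\pi^2\cdot 4^2/4$: equality, not strict inequality. For $k=4$ this is repairable by comparing with a better rectangle (the true minimiser has $\mu_4^*=\tfrac23\pi^2(2+\sqrt3)<4\pi^2$). For $k=2$ it is not repairable by any comparison: the infimum over all admissible rectangles equals $\pi^2$, which is exactly the degenerate limit $\pi^2k^2/4$, so degenerating sequences \emph{are} minimising for $k=2$ (the paper notes this explicitly). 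Existence for $k=2$ holds only because the value $\pi^2$ is also attained at $a=1$, and that must be verified directly. The paper's route is: Polya's inequality gives $\mu_k^*\le\nu_k\le 4\pi k<\pi^2k^2/4$ for $k\ge 6$, ruling out degeneration, and the cases $k=2,3,4,5$ are settled by explicit computation of $a\mapsto\mu_k(R_{a,2-a})$. Your proposal needs the same case analysis for small $k$; as written it has a genuine gap at $k=2$.

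In part (iii) you eventually land on the correct mechanism (only the leading Weyl term matters, since the perimeter is fixed at $4$ for all competitors and $a(2-a)$ is uniquely maximised at $a=1$), but you explicitly defer the hard step, namely a Weyl asymptotic that is uniform in $a$ over compact subsets of $(0,1]$, and your first display in (iii) has the wrong leading term ($4\pi k$ instead of $4\pi k/(a(2-a))$, corrected later). The paper avoids uniformity altogether by a monotonicity trick: rectangle Neumann eigenvalues decrease when both sides increase, so if $a_k^*\to\alpha$ along a subsequence then for large $k$ one has $R_{a_k^*,2-a_k^*}\subset R_{\alpha+\delta,\,2-\alpha+\delta}$ and hence $\mu_k(R_{\alpha+\delta,\,2-\alpha+\delta})\le\mu_k(R_{a_k^*,2-a_k^*})\le\nu_k\le 4\pi k$; applying the Weyl law to the \emph{single fixed} rectangle $R_{\alpha+\delta,\,2-\alpha+\delta}$ and letting $\delta\to 0$ gives $\alpha(2-\alpha)\ge 1$, i.e. $\alpha=1$. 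If you want to keep your route you must actually prove the uniform one-term asymptotic (e.g. by the lattice-point estimates of Lemma~\ref{lem1} with explicit $a$-dependent error terms); the paper's comparison argument is the cheaper way to close the proof.
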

\begin{proof}
If $k=1$, then $(R_{\frac{1}{n}, 2-\frac{1}{n}})_n$ is minimising and collapses to a segment of length $2$. This proves the assertion under (i).

To prove (ii), we fix $k \ge 2,$ and consider a minimising sequence
for problem  \eqref{bbg05}, $(R_{a_n, 2-a_n})_n$ with $a_n \in (0,1].$
By taking a monotone subsequence if necessary, $(a_n)_n$ converges. Then $(a_n)_n$ cannot
converge to 0. If $a_n \rightarrow 0$, then for $n$ large enough such that
$0< a_n \leq \frac{2}{k+1}$, we have that
\begin{equation*}
\mu_k (R_{a_n, 2-a_n})= \frac{\pi^2 k^2}{(2-a_n)^2} \rightarrow  \frac{\pi^2 k^2}{4}.
\end{equation*}
However, by minimality and by P\'olya's Inequality, we have that
\begin{equation*}
\mu_k (R_{a_n, 2-a_n}) \le \nu_k\le4\pi k.
\end{equation*}
Clearly, this inequality leads to a contradiction as soon as
$\frac{\pi^2 k^2}{4} > 4\pi k$. That is the case for $k \ge 6$.
So, for $k \geq 6$, $a_n \rightarrow a_k^* >0$, which gives an
optimal rectangle, $R_{a_k^*,2-a_k^*}$.

Similarly to Subsection (3.1), we obtain the values of $\mu_k^* (R_{a_k^*,2-a_k^*})$ for $k=2,3,4,5$ by direct computation.
In the table below, we list these values as well as the corresponding values of $a_k^*$ and the minimising modes.

\begin{center}
 \begin{tabular}{|c|c|c|c|}
  \hline
  $k$ & $\mu_k^*$ & $a_k^*$ & \textup{Minimising modes}\\
  \hline
  2 & $\pi^2$ & $1$ & (1,0),(0,1) \\
  \hline
  3 & $2\pi^2$ & $1$ & (1,1) \\
  \hline
  4 & $ \displaystyle \frac 23\pi^2(2+\sqrt{3})$ & $\sqrt{3}-1$ & (2,0),(1,1)\\
  \hline
  5 & $4\pi^2$ & {$1$} & {(2,0),(0,2)}\\
  \hline
\end{tabular}
\end{center}
We note that a degenerating sequence of rectangles $R_{a_2^{(n)},2-a_2^{(n)}}$ with $a_2^{(n)} \rightarrow 0$,
gives $\mu_2(a_2^{(n)}) \rightarrow \pi^2$.
In addition, we remark that $\mu_3^*$ has only one minimising mode $(1,1)$. By considering the derivative of the function
$\frac{\pi^2}{(2-a)^2} + \frac{\pi^2}{a^2}$ with respect to $a$, we see that the point $a=1$ is a minimum point.
This is due to the fact that for the mode $(1,1)$ it is possible to obtain a vanishing derivative.

To prove assertion (iii) of the theorem we note that
by minimality and P\'{o}lya's Inequality,
\begin{equation*}
\mu_k (R_{a_k^*, 2-a_k^*}) \le \nu_k\le 4\pi k.
\end{equation*}
Recall that if $R_{a_1, b_1}, R_{a_2, b_2}$ are two rectangles such that $a_1\le a_2$ and $b_1\le b_2$, then for every $k \ge 0$
$\mu_k(R_{a_1, b_1}) \ge \mu_k(R_{a_2, b_2}).$ The latter is a direct consequence of the expression of the eigenvalues on rectangles.
Assume for some subsequence (still denoted with the same index $k$) that $a_k^* \rightarrow \alpha$.
Then, for every $\delta >0$, there exists $K_\delta$ such that for $k \ge K_\delta$ we have that
\begin{equation*}
R_{a_k^*, 2-a_k^*} \subset R_{\alpha + \delta, 2-\alpha + \delta}.
\end{equation*}
We have that
\begin{equation*}
\mu_k (R_{\alpha + \delta, 2-\alpha + \delta}) \le \mu_k (R_{a_k^*, 2-a_k^*}) \le \nu_k\le 4\pi k.
\end{equation*}
Using the Weyl asymptotic on $R_{\alpha + \delta, 2-\alpha + \delta}$, and letting
${k \rightarrow \infty}$, we obtain
\begin{equation*}
\frac{4\pi}{ (\alpha + \delta)(2-\alpha + \delta)} \le 4\pi.
\end{equation*}
By subsequently letting $\delta \rightarrow 0$, we obtain that
$\alpha (2-\alpha) \ge 1$, which leads to $\alpha=1$.
Hence, $\lim_{k \rightarrow \infty} a_k^* =1$, and this limit is independent of the subsequence $(a_k^*)$.
\end{proof}

 It was shown in \cite{AF2} that the corresponding sequence of minimisers for Dirichlet eigenvalues on rectangles with a perimeter constraint
converges to the square with perimeter 4 as $k \rightarrow \infty$. A similar result holds in higher dimensions, and estimates for the rate of Hausdorff convergence were obtained (\cite{AF2}).

\subsection{Further remarks on higher dimensions.}\label{S3.3}

We conclude with some remarks on the higher-dimensional analogues of the problems
that we investigated in this paper.

If $m\ge 3$ then problem \eqref{bbg05} with
fixed $k$ does not have a solution, since a sequence of
cuboids with one very long edge has vanishing $k$th eigenvalue.

In order to analyse problem \eqref{bbg04}, we first observe that for every $k\ge 1$
and every $m \ge 2$ the problem
\begin{equation*}
\max\{\mu_k (R) : R \mbox{  cuboid}, R \subseteq {\mathbb R}^m, \;|R|=1\},
\end{equation*}
has a solution. Indeed, if a maximising sequence is degenerating, then one of the edges
of the cuboid is vanishing and so another one is blowing up. This second phenomenon
produces vanishing eigenvalues, so it is excluded.

Now, concerning problem  \eqref{bbg04} in $\R^m$, $ m \geq 3$,
we claim that there exists a solution. Indeed, a maximising sequence of cuboids
cannot have two (or more) vanishing edges, since this implies that another edge
is blowing up, so the $k$th eigenvalue is vanishing. There are only two possibilities: either
there is convergence to a non-degenerate cuboid, or (only) one edge is vanishing.
In the latter case, for a sufficiently short edge, the eigenvalues of
the cuboid will be given by the eigenvalues of the $(m-1)$-dimensional complement
cuboid that satisfies a volume constraint.
That is, if $\left(R_{a_1^{(n)}, \dots, a_m^{(n)}}\right)_n$ is a maximising sequence of
cuboids such that for all $i \in \{1, \dots , m\}$, $a_i^{(n)} \rightarrow a_i$
and, without loss of generality, $a_1^{(n)} \rightarrow 0$, then the perimeter constraint
becomes $a_2 a_3 \dots a_{m} = 4$. Thus, the eigenvalues of $R_{a_1, \dots ,a_m}$ are
the eigenvalues of the $(m-1)$-dimensional cuboid with edges of length $a_2, a_3, \dots,
a_m$ subject to a volume constraint. At this point, making the vanishing edge longer
would increase the eigenvalues.

For every $k$, let $R_k^*$ be a maximising cuboid.
Then, for $k \rightarrow \infty$ the sequence $(R_k^*)_k$
has to collapse. By considering the Weyl asymptotic on $R_k^*$,
$\mu_k^*$ would behave like $k^\frac 2m$. However, if one chooses a particular
sequence that collapses towards a fixed $(m-1)$-dimensional cuboid, then
$\mu_k^*$ would behave like $k^\frac {2}{m-1}$.

\bigskip


\end{document}